\newcounter{lemma}[section]
\newcounter{corollary}[section]
\newcounter{remark}[section]
\newcounter{theorem}[section]
\newcounter{proposition}[section]
\numberwithin{equation}{section}
\begin{document}

\markboth{\centerline{E.~SEVOST'YANOV}}
{\centerline{ON BOUNDARY EXTENSION }}

\def\cc{\setcounter{equation}{0}
\setcounter{figure}{0}\setcounter{table}{0}}

\overfullrule=0pt

%\normalsize\large

\author{{E.~SEVOST'YANOV}\\}

\title{
{\bf ON BOUNDARY EXTENSION OF MAPPINGS IN METRIC SPACES IN THE TERMS
OF PRIME ENDS}}

\date{\today}
\maketitle

%\large
\begin{abstract} We study the
boundary behavior of the so-called ring $Q$-mappings obtained as a
natural generalization of mappings with bounded distortion. We
establish a series of conditions imposed on a function $Q(x)$ for
the continuous extension of given mappings with respect to prime
ends in domains with regular boundaries in metric spaces.
\end{abstract}

\bigskip
{\bf 2010 Mathematics Subject Classification: Primary 30L10;
Secondary 30C65}

\section{Introduction} Problems of continuous extension of mappings
with finite distortion in terms of prime ends in ${\Bbb R}^n$ were
recently investigated in \cite{GRY} for $n=2,$ and in \cite{KR} for
$n\geqslant 2.$ The latter paper was devoted to the case of
homeomorphisms between spatial domains with regular prime ends.
However, the case of mappings with branching was not considered in
these papers. The present paper solves similar problems in general
metric spaces and not only for homeomorphisms but also for more
general open discrete mappings, cf. \cite{ABBS}, \cite{A} and
\cite{Sev$_1$}.

\medskip
The following definitions are from \cite{A} and \cite{ABBS}. Given
a metric space $(X, d, \mu)$ with a measure $\mu,$ a {\it domain}
in $X$ is an open path-connected set in $X.$ Recall that $X$ is
locally (path) connected if every neighborhood of a point $x\in X$
contains a (path) connected neighborhood. We define the
Mazurkiewicz distance $d_M$ on $X$ by $d_M(x, y)=\inf {\rm diam}\,
E,$ where the infimum is over all connected sets $E\subset X$
containing $x, y\in E.$ Clearly, $d_M$ is a metric on $X.$ Let
$\gamma$ be a curve in $\Omega.$ We define its diameter as
follows:
$${\rm diam}\,\gamma:= \sup d(x, y),$$
where the supremum is taken over all points $x, y\in \gamma.$
When $x,y\in X,$ we have
$$d_M(x, y)\geqslant d(x, y)\,.$$

\medskip
Set $$B(x_0, r):=\{x\in X: d(x, x_0)<r\}\,,\quad S(x_0, r):=\{x\in
X: d(x, x_0)=r\}\,.$$
From now on we assume that the space $X$ is complete and supports a
$p$-Poincare inequality, and that the measure $µ$ is doubling (see
\cite{ABBS}). In this case, a space $X$ is locally connected (see
\cite[Section~2]{ABBS}), and proper (see
\cite[Proposition~3.1]{BB}). If $X$ is also connected then there
exist constants $C>0$ and $q>0$ such that for all  $x\in X,$ $0 <
r\leqslant R$ and $y\in B(x, R),$
\begin{equation}\label{eq2}
\frac{\mu(B(y, r))}{\mu(B(x, R))}\leqslant
C\left(\frac{r}{R}\right)^q\,,
\end{equation}
see \cite[(2.2)]{ABBS}. Let $\Omega\varsubsetneq X$ be a bounded
domain in $X,$ i.e. a bounded nonempty connected open subset of $X$
that is not the whole space $X$ itself. \medskip The completion of
the metric space $(\Omega,  d_M)$ is denoted $\overline{\Omega}^M,$
and $d_M$ extends in the standard way to $\overline{\Omega}^M:$  For
$d_M$-Cauchy sequences $\{x_n\}_{n=1}^{\infty},$
$\{y_n\}_{n=1}^{\infty}\in \Omega$ we define the equivalence
relation
$$\{x_n\}_{n=1}^{\infty}\sim\{y_n\}_{n=1}^{\infty}\quad \text{if}\quad \lim\limits_{n\rightarrow\infty}
d_M(x_n, y_n)=0\,.$$
Note that every Cauchy sequence is trivially equivalent to any of
its subsequences.

The collection of all equivalence classes of $d_M$-Cauchy sequences
can be formally considered to be $\overline{\Omega}^M,$ but we will
identify equivalence classes of $d_M$-Cauchy sequences having a
limit in $\Omega$ with that limit point. By considering equivalence
classes of $d_M$-Cauchy sequences without limits in $\Omega$ we
define the boundary of $\Omega$ with respect to $d_M$ as
$\partial_M\Omega=\overline{\Omega}^M\setminus\Omega.$ Since $X$ is
proper, we know that $\Omega$ is locally compact with respect to
$d_M,$ and it follows that $\Omega$ is an open subset of
$\overline{\Omega}^M.$ We extend the original metric $d_M$ on
$\Omega$ to $\overline{\Omega}^M$ by setting
$$d_M(x^*, y^*)=\lim\limits_{n\rightarrow\infty} d_M(x_n, y_n)\,,$$
if $x^*=\{x_n\}_{n=1}^{\infty}\in\overline{\Omega}^M$ and
$y^*=\{y_n\}_{n=1}^{\infty}\in\overline{\Omega}^M.$ This is well
defined and an extension of $d_M.$

\medskip
We call a bounded connected set $E \varsubsetneq \Omega $ an {\it
acceptable} set if $\overline{E}\cap\partial\Omega\ne\varnothing.$
By discussion in \cite{ABBS}, we know that boundedness and
connectedness of an acceptable set $E$ implies that $\overline{E}$
is compact and connected. Furthermore, $E$ is infinite, as otherwise
we would have $\overline{E}=E\subset \Omega.$ Therefore,
$\overline{E}$ is a continuum. Recall that a {\it continuum} is a
connected compact set containing at least two points.

\medskip
We call a sequence $\left\{E_k\right\}_{k=1}^{\infty}$ of
acceptable sets a chain if it satisfies the following conditions:

1. $E_{k+1}\subset E_k$ for all $k=1, 2,\ldots,$

2. ${\rm dist}\,(\Omega\cap \partial E_{k+1}, \Omega\cap \partial
E_k)>0$ for all $k=1, 2,\ldots,$

3. The impression
$\bigcap\limits_{k=1}^{\infty}\overline{E_k}\subset \partial
\Omega.$

\begin{figure}[h]
\centerline{\includegraphics[scale=0.6]{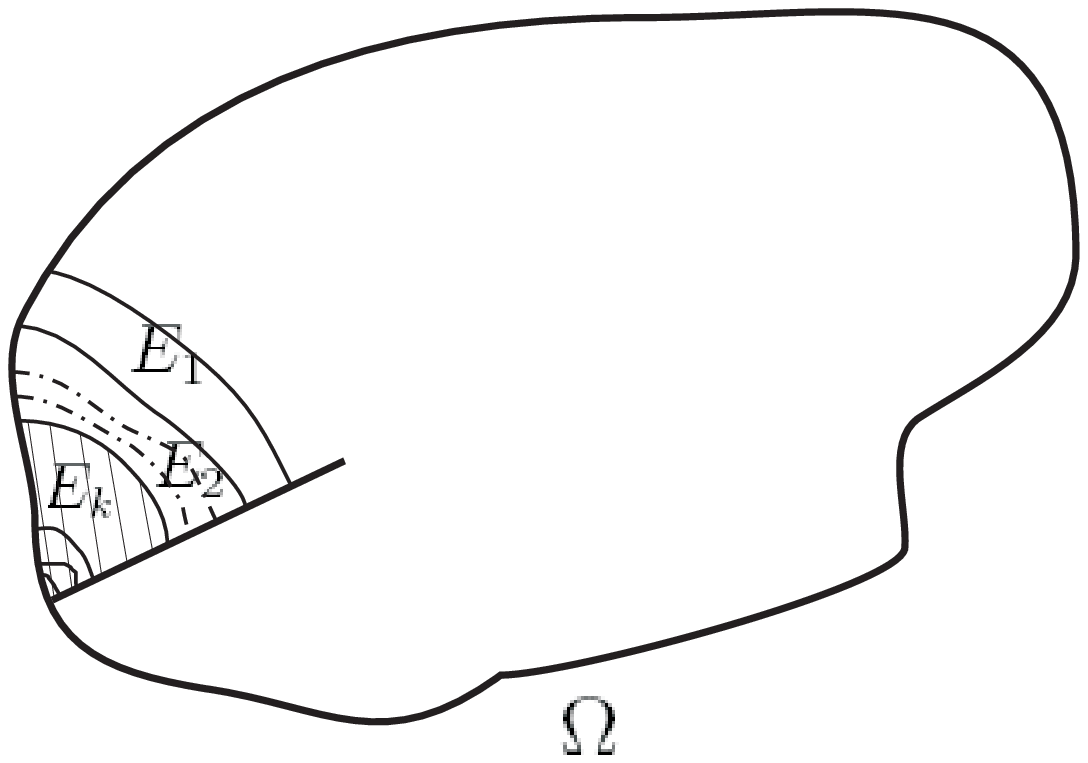}}
\centerline{Picture~1. A prime end in domain $\Omega$}
\end{figure}

\medskip
We say that a chain $\left\{E_k\right\}_{k=1}^{\infty}$ divides the
chain $\left\{F_k\right\}_{k=1}^{\infty}$ if for each $k$ there
exists $l_k$ such that $E_{l_k}\subset F_k.$ (2) Two chains are
equivalent if they divide each other. A collection of all mutually
equivalent chains is called an {\it end} and denoted $[E_k],$ where
$\left\{E_k\right\}_{k=1}^{\infty}$ is any of the chains in the
equivalence class. The impression of $[E_k],$ denoted $I[E_k],$ is
defined as the impression of any representative chain. The
collection of all ends is called the {\it end boundary} and is
denoted $\partial_E\Omega.$ We say that an end $[E_k]$ is a {\it
prime end} if it is not divisible by any other end. The collection
of all prime ends is called the {\it prime end boundary} and is
denoted $E_{\Omega}.$

\medskip
We say that a sequence of points $\{x_n\}_{n=1}^{\infty}$ in
$\Omega$ converges to the end $[E_k],$ and write $x_n\rightarrow
[E_k]$ as $n\rightarrow\infty,$ if for all $k$ there exists $n_k$
such that $x_n\in E_k$ whenever $n\geqslant n_k.$ If $x_n\rightarrow
[E_k]$ as $n\rightarrow\infty,$ and $[E_k]$ divides $[F_k],$ then
$x_n$ also converges to $[F_k].$ Convergence of points and ends
defines a topology on $\Omega\cup\partial_E\Omega$ (see
e.g.~\cite[Proposition~8.4]{ABBS}). In this topology, a collection
$C\subset\Omega\cup\partial_E\Omega$ of points and ends is closed if
whenever (a point or an end) $y\in\Omega\cup\partial_E\Omega$ is a
limit of a sequence in $C,$ then $y\in C.$

\medskip
In what follows, we set $\overline{\Omega}^P:=\Omega\cup
E_{\Omega}.$ We say that $\Omega$ is {\it finitely connected} at a
point $x_0\in\partial \Omega$ if for every $r>0$ there is an open
set $G$ (open in $X$) such that $x_0\in G \subset B(x_0, r)$ and
$G\cap\Omega$ has only finitely many components. If $\Omega$ is
finitely connected at every boundary point, then it is called {\it
finitely connected} at the boundary. The following results have been
proved in~\cite{ABBS}.

\medskip
\begin{proposition}\label{pr3}
{\sl Assume that $\Omega$ is finitely connected at the boundary.
Then all prime ends have singleton impressions, and every $x\in
\partial\Omega$ is the impression of a prime end and is accessible (see~\cite[Theorem~10.8]{ABBS}). }
\end{proposition}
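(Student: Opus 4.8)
Since this is~\cite[Theorem~10.8]{ABBS}, it would suffice to refer to that paper; for completeness I indicate the line of argument I would give. The plan is to organize it around three ingredients: a distinguished chain attached to each boundary point, the combinatorial force of finite connectedness, and the local path-connectedness available in this setting.

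\emph{A chain at a boundary point.} Fix $x_0\in\partial\Omega$. Using that $\Omega$ is finitely connected at $x_0$, I would choose open sets $G_k\ni x_0$ with $\overline{G_{k+1}}\subset G_k\subset B(x_0,2^{-k})$ for which each $G_k\cap\Omega$ has only finitely many components. Since $x_0\in\overline{G_k\cap\Omega}$ and there are only finitely many components, some component $V_k$ of $G_k\cap\Omega$ satisfies $x_0\in\overline{V_k}$; because every component of $G_{k+1}\cap\Omega$ lies in a single component of $G_k\cap\Omega$, a standard compactness argument produces such a selection with $V_{k+1}\subset V_k$. Put $E_k:=V_k$. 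Then each $E_k$ is bounded, connected and $x_0\in\overline{E_k}\cap\partial\Omega$, so $E_k$ is acceptable; $E_{k+1}\subset E_k$; moreover $\Omega\cap\partial E_{k+1}\subset\overline{G_{k+1}}$ while $\Omega\cap\partial E_k\subset\partial G_k$, and these are disjoint compact sets (recall that $X$ is proper), so ${\rm dist}\,(\Omega\cap\partial E_{k+1},\Omega\cap\partial E_k)>0$; finally $\bigcap_k\overline{E_k}\subset\bigcap_k\overline{B(x_0,2^{-k})}=\{x_0\}\subset\partial\Omega$. Hence $\{E_k\}$ is a chain and $I[E_k]=\{x_0\}$. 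I would then check that $[E_k]$ is a prime end; granting this, every $x_0\in\partial\Omega$ is the impression of a prime end.

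\emph{Accessibility.} Under the standing hypotheses $X$ is locally connected, and (being quasiconvex, as a complete doubling Poincare space) in fact locally path-connected, so $\Omega$, and with it the open components $V_k$ above, are path-connected. Choosing $x_k\in V_k$ one has $x_k\to x_0$ and $x_k,x_{k+1}\in V_k\subset B(x_0,2^{-k})$, so joining consecutive points by paths inside $V_k$ and concatenating yields a curve $\gamma\colon[0,1)\to\Omega$ with $\gamma(t)\to x_0$ as $t\to 1^-$; thus $x_0$ is accessible.

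\emph{Singleton impressions in general.} Suppose some prime end $[F_k]$ had two distinct points $a,b$ in its impression. Running the construction above at $a$, I would obtain the neighborhood-component chain $\{V_m\}$ with $I[V_m]=\{a\}$, and then show that $[V_m]$ divides $[F_k]$, i.e. that for every $m$ some $F_{l_m}\subset V_m$. Primeness of $[F_k]$ would then force $[F_k]$ and $[V_m]$ to be equivalent, so $I[F_k]=\{a\}$, contradicting $b\in I[F_k]$. The step I expect to be the main obstacle is exactly this inclusion $F_{l_m}\subset V_m$ (and, by the same token, the verification that $[E_k]$ is prime in the first step): it is here that having only finitely many components at each scale must be combined with the positive-gap condition~2 in the definition of a chain in order to trap a tail of $\{F_k\}$ inside the distinguished component $V_m$. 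That trapping argument is the technical heart of~\cite[Theorem~10.8]{ABBS}.
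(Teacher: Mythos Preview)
The paper does not prove this proposition at all; it is stated as a quotation of \cite[Theorem~10.8]{ABBS} and used as a black box. Your opening sentence already identifies this, and the remainder of your proposal goes beyond what the paper does by sketching the ABBS argument.

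As for the sketch itself, the outline is sound and matches the standard line of reasoning: build a nested chain of neighborhood components at a boundary point, use local path-connectedness for accessibility, and for an arbitrary prime end show that the component chain at any point of its impression divides it. You are also candid that the crucial step---trapping a tail of $\{F_k\}$ inside the distinguished component $V_m$, and likewise verifying primeness of $[E_k]$---is left to \cite{ABBS}; that is indeed where the finite-connectedness hypothesis and the positive-gap condition do the real work, and it is not something one can wave away. One small point: your inclusion $\Omega\cap\partial E_k\subset\partial G_k$ relies on components of $G_k\cap\Omega$ being open, which in turn uses local connectedness of $X$; you invoke this later for accessibility but it is already needed here. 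Otherwise the sketch is accurate, and in the context of the present paper simply citing \cite[Theorem~10.8]{ABBS}, as you begin by doing, is exactly what is required.
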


\medskip
\begin{proposition}\label{pr2}
{\sl Assume that $\Omega$ is finitely connected at the boundary.
Then there is a homeomorphism $\Phi:\overline{\Omega}^P\rightarrow
\overline{\Omega}^M$ such that $\Phi|_{\Omega}$ is the identity map.
Moreover, the prime end closure $\overline{\Omega}^P$ is metrizable
with the metric $m_P(x, y):= d_M(\Phi(x), \Phi(y)).$ The topology on
$\overline{\Omega}^P$ given by this metric is equivalent to the
topology given by the sequential convergence discussed above
(see~\cite[Corollary~10.9]{ABBS}).}
\end{proposition}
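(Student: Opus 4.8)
The plan is to build the homeomorphism $\Phi$ by hand out of Proposition~\ref{pr3}: declare $\Phi$ to be the identity on $\Omega,$ and send each prime end to the point of the Mazurkiewicz completion ``cut out'' by any sequence that converges to it. The bulk of the work is then to verify that $\Phi$ is a well-defined bijection and that it matches the two topologies; once that is done, pulling $d_M$ back by $\Phi$ produces the metric $m_P$ and the equivalence of topologies follows formally (note that the sequential-convergence topology on $\overline{\Omega}^P$ is, by its very definition via sequentially closed sets, a sequential space, so a homeomorphism can be detected on convergent sequences).

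First I would set up $\Phi$ on $E_{\Omega}.$ Fix a prime end $[E_k].$ By Proposition~\ref{pr3} its impression is a single point $\xi\in\partial\Omega,$ so $\bigcap_k\overline{E_k}=\{\xi\};$ since each $\overline{E_k}$ is a continuum (the space being proper) the usual nested-compacta argument gives ${\rm diam}\,\overline{E_k}\to 0.$ Picking $x_k\in E_k,$ the sequence $\{x_k\}$ is $d_M$-Cauchy, because $E_k$ is connected and hence $d_M(x_m,x_k)\leqslant{\rm diam}\,E_k\to 0$ for $m\geqslant k;$ and since $x_k\to\xi\in\partial\Omega$ in $d\leqslant d_M,$ it has no $d_M$-limit in $\Omega,$ so its class lies in $\partial_M\Omega\subset\overline{\Omega}^M.$ I would let $\Phi([E_k])$ be that class. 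Independence of all choices is immediate from the same estimate: two sequences converging to $[E_k]$ eventually lie in a common $E_k$ of arbitrarily small diameter, so are $d_M$-equivalent, and replacing $\{E_k\}$ by an equivalent chain does not change which sequences converge to the end. Thus $\Phi:\overline{\Omega}^P\to\overline{\Omega}^M$ is well defined, is the identity on $\Omega,$ and maps $E_{\Omega}$ into $\partial_M\Omega.$

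Next I would show $\Phi$ transports sequential convergence on $\overline{\Omega}^P$ to $d_M$-convergence on $\overline{\Omega}^M$ in both directions. Forward: if $z_n\to z$ in $\overline{\Omega}^P,$ choose for each $z_n\in E_{\Omega}$ a representative lying far down its defining chain and use the diameter estimate above together with the definition of convergence to an end to get $d_M(\Phi(z_n),\Phi(z))\to 0.$ Backward: argue contrapositively, using that $\Omega$ is locally compact with respect to $d_M$ (which holds since $X$ is proper), so a sequence that fails to converge to $z$ in $\overline{\Omega}^P$ has a subsequence whose $\Phi$-images stay a positive $d_M$-distance from $\Phi(z).$ This makes $\Phi$ a homeomorphism for the respective topologies, and then $m_P(x,y):=d_M(\Phi(x),\Phi(y))$ is a metric on $\overline{\Omega}^P$ inducing exactly the sequential-convergence topology.

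It remains to make $\Phi$ bijective. Injectivity: if $[E_k]\neq[F_k]$ but $\Phi([E_k])=\Phi([F_k]),$ then sequences $x_k\in E_k,$ $y_k\in F_k$ are $d_M$-equivalent, i.e.\ joined by connected sets of arbitrarily small diameter, and feeding these sets back into the two chains shows that each divides the other, a contradiction. Surjectivity onto $\partial_M\Omega$ is the crux and, I expect, the main obstacle. Given a $d_M$-Cauchy sequence $\{x_n\}\subset\Omega$ without $d_M$-limit in $\Omega,$ its $d$-limit $\xi$ must lie in $\partial\Omega$ (it cannot be an interior point, since around an interior point $X$ has arbitrarily small connected neighborhoods inside $\Omega,$ so a sequence $d$-converging to an interior point also $d_M$-converges to it). For $\varepsilon_k\downarrow 0$ I would pick $N_k\uparrow\infty$ with $d_M(x_m,x_n)<\varepsilon_k$ for $m,n\geqslant N_k$ and let $E_k$ be the union of all connected subsets of $\Omega$ of diameter $<\varepsilon_k$ meeting the tail $\{x_n\}_{n\geqslant N_k}$ in at least two points; then $E_k$ is connected and bounded, of diameter $O(\varepsilon_k),$ contains that tail, satisfies $\overline{E_k}\ni\xi$ and $\bigcap_k\overline{E_k}=\{\xi\}\subset\partial\Omega,$ so the $E_k$ are acceptable and nested. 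The one property that genuinely requires care is the separation requirement ${\rm dist}\,(\Omega\cap\partial E_{k+1},\,\Omega\cap\partial E_k)>0;$ this I would arrange by passing to a sparse subsequence of the $E_k$ and, invoking finite connectedness of $\Omega$ at $\xi,$ slightly fattening each set so that the boundaries within $\Omega$ separate while all other properties persist. The resulting chain defines an end with impression $\{\xi\}$ to which $\{x_n\}$ converges, and checking that this end is \emph{prime}, again using finite connectedness at $\xi,$ produces a prime end sent by $\Phi$ to the prescribed class. The two delicate points—the separation requirement and primeness of the constructed end—are the technical heart of the proof and are exactly what is carried out in detail in~\cite{ABBS}.
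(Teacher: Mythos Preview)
The paper does not supply its own proof of this proposition: it is quoted verbatim as a known result, with the reference to \cite[Corollary~10.9]{ABBS} standing in lieu of argument. There is therefore nothing in the paper to compare your proposal against.

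That said, your outline is a faithful reconstruction of how the result is actually established in \cite{ABBS}: define $\Phi$ as the identity on $\Omega$ and send a prime end to the $d_M$-equivalence class of any sequence converging to it, use the singleton-impression fact (your Proposition~\ref{pr3}) to force ${\rm diam}\,E_k\to 0$ and hence obtain a genuine $d_M$-Cauchy sequence, and then verify bijectivity and bicontinuity. You have correctly located the two genuinely nontrivial steps --- producing a \emph{chain} (with the positive separation of successive relative boundaries) from an arbitrary $d_M$-Cauchy sequence, and showing the resulting end is \emph{prime} --- and correctly flagged that these rely on finite connectedness at the boundary and are worked out in \cite{ABBS}. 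Your treatment of the backward continuity of $\Phi$ is thin (the contrapositive is stated but not really argued), but since the paper itself defers the entire argument to \cite{ABBS}, your sketch is already more than what the paper provides.
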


\medskip
Recall, for a given continuous path $\gamma:[a, b]\rightarrow X$ in
a metric space $(X, d),$ that its length is the supremum of the sums
$$
\sum\limits^{k}_{i=1} d(\gamma(t_i),\gamma(t_{i-1}))
$$
over all partitions $a=t_0\leqslant t_1\leqslant\ldots\leqslant
t_k=b$ of the interval $ [a,b].$ The path $\gamma$ is called {\it
rectifiable} if its length is finite.
\medskip

\medskip
Given a family of paths $\Gamma$ in $X$, a Borel function
$\varrho:X\rightarrow[0,\infty]$ is called {\it admissible} for
$\Gamma$, abbr. $\varrho\in {\rm adm}\,\Gamma$, if
%
%\begin{equation}\label{eq13.2}
$$\int\limits_{\gamma}\varrho\,ds\ \geqslant\ 1$$
%\end{equation}
for all (locally rectifiable) $\gamma\in\Gamma$.
\medskip
Everywhere further, for any sets $E, F,$ and $G$ in $X$, we denote
by $\Gamma(E, F, G)$ the family of all continuous curves $\gamma:[0,
1]\rightarrow X$ such that $\gamma(0)\in E,$ $\gamma(1)\in F,$ and
$\gamma(t)\in G$ for all $t\in (0, 1).$ Everywhere further $(X, d,
\mu)$ and $\left(X^{\,\prime}, d^{\,\prime}, \mu^{\,\prime}\right)$
are metric spaces with metrics $d$ and $d^{\,\prime}$ and locally
finite Borel measures $\mu$ and $\mu^{\,\prime},$ correspondingly.
We will assume that $\mu$ is a Borel measure such that $0
<\mu(B)<\infty$ for all balls $B$ in $X.$

\medskip
Given $p\geqslant 1,$ the $p$-modulus of the family $\Gamma$ is the
number
%
%\begin{equation}\label{eq13.5}
$$M_p(\Gamma)=\inf\limits_{\rho\in {\rm
adm}\,\Gamma}\int\limits_{X}\varrho^{\,p}(x)\, d\mu(x)\,.$$
%\end{equation}
%
Should ${\rm adm\,}\Gamma$ be empty, we set $M_p(\Gamma)=\infty.$ A
family of paths $\Gamma_1$ in $X$ is said to be {\it minorized} by a
family of paths $\Gamma_2$ in $X,$ abbr. $\Gamma_1>\Gamma_2,$ if,
for every path $\gamma_1\in\Gamma_1$, there is a path
$\gamma_2\in\Gamma_1$ such that $\gamma_2$ is a restriction of
$\gamma_1.$ In this case,
\begin{equation}\label{eq32*A}
\Gamma_1
> \Gamma_2 \quad \Rightarrow \quad M_p(\Gamma_1)\le M_p(\Gamma_2)
\end{equation} (см. \cite[Theorem~1]{Fu}).

\medskip
Let $G$ and $G^{\,\prime}$ be domains with finite Hausdorff
dimensions $\alpha$ and $\alpha^{\,\prime}\geqslant 1$ in spaces
$(X,d,\mu)$ and $(X^{\,\prime},d^{\,\prime}, \mu^{\,\prime}),$ and
let $Q:G\rightarrow[0,\infty]$ be a measurable function. Given
$x_0\in
\partial G,$ denote $S_i:=S(x_0, r_i),$ $i=1,2,$ where $0<r_1<r_2<\infty.$ We say that a mapping
$f:G\rightarrow G^{\,\prime}$ is a {\it ring $Q$-mapping at a point
$x_0\in \partial G$}, if the inequality
\begin{equation}\label{eq1C}
M_{{\,\alpha}^{\,\prime}}(f(\Gamma(S_1, S_2,
A)))\leqslant\int\limits_{A\cap G}Q(x)\eta^{\alpha}(d(x,
x_0))\,d\mu(x)
\end{equation}
holds for any ring
\begin{equation}\label{eq15}
A=A(x_0, r_1, r_2)=\{x\in X: r_1<d(x, x_0)<r_2\}, \quad 0 < r_1 <
r_2 <\infty\,,
\end{equation}
and any measurable function
$\eta:(r_1, r_2)\rightarrow [0, \infty]$ such that
%
%\begin{equation}\label{eq8}
$$\int\limits_{r_1}^{r_2}\eta(r)dr\geqslant 1$$
%\end{equation}
%
holds. We also consider the definition (\ref{eq1C}) for maps
$f:G\rightarrow X^{\,\prime},$ where $G\subset X$ is a domain of
Hausdorff dimension $\alpha,$ and $X^{\,\prime}$ is a metric space
of Hausdorff dimension $\alpha^{\,\prime}.$

\medskip
\begin{remark}\label{rem1}
Sometimes, some another (similar) definition of ring $Q$-maps is
considered. Let $G$ and $G^{\,\prime}$ be domains with finite
Hausdorff dimensions $\alpha$ and $\alpha^{\,\prime}\geqslant 1$ in
spaces $(X,d,\mu)$ and $(X^{\,\prime},d^{\,\prime},
\mu^{\,\prime}),$ and let $Q:G\rightarrow[0,\infty]$ be a measurable
function. Following to \cite{Sm}, we say that a mapping
$f:G\rightarrow G^{\,\prime}$ is a ring $Q$-mapping at a point
$x_0\in \overline{G},$ if the inequality
\begin{equation}\label{eq1B}
M_{{\,\alpha}^{\,\prime}}(f(\Gamma(C_1, C_0,
G)))\leqslant\int\limits_{A\cap G}Q(x)\eta^{\alpha}(d(x,
x_0))d\mu(x)
\end{equation}
holds for any ring
%
%\begin{equation}\label{eq2A}
$$A=A(x_0, r_1, r_2)=\{x\in X: r_1<d(x, x_0)<r_2\}, \quad 0 < r_1 <
r_2 <\infty\,,$$
%\end{equation}
%
and any two continua $C_0\subset \overline{B(x_0, r_1)}\cap G,$
$C_1\subset G\setminus B(x_0, r_2),$ and any measurable function
$\eta:(r_1, r_2)\rightarrow [0, \infty]$ such that
\begin{equation}\label{eq*3!!}
\int\limits_{r_1}^{r_2}\eta(r)dr\geqslant 1
\end{equation}
holds.

\medskip
Observe that (\ref{eq1C}) implies (\ref{eq1B}). In fact, assume that
(\ref{eq1C}) holds.  Let $C_0\subset \overline{B(x_0, r_1)}\cap G,$
$C_1\subset G\setminus B(x_0, r_2)$ be two continua. Assume that
$\gamma\in \Gamma(C_1, C_0, G).$ Given a curve $\gamma:[0,
1]\rightarrow G,$ we set $|\gamma|:=\{x\in G: \exists\,t\in [0, 1]:
\gamma(t)=x\}.$ Note that $|\gamma|$ is not included entirely both
in $B(x_0, r_2)$ and $G\setminus B(x_0, r_2),$ therefore there
exists $y_1\in S(x_0, r_2)$ (see \cite[Theorem 1, $\S\,$46, item
I]{Ku}). Let $\gamma:[0, 1]\rightarrow G$ and let $t_1\in (0, 1)$ be
such that $\gamma(t_1)=y_1.$ There is no loss of generality in
assuming that $|\gamma|_{[0, t_1)}|\subset B(x_0, r_2).$ We put
$\gamma_1:=\gamma|_{[0, t_1)}.$ Observe that $|\gamma_1|\subset
B(x_0, r_2),$ moreover, $\gamma_1$ is not included entirely either
in $\overline{B(x_0, r_1)}$ or in $G\setminus \overline{B(x_0,
r_1)}.$ Consequently, there exists $t_2\in (0, t_1)$ with
$\gamma_1(t_2)\in S(x_0, r_1)$ (see \cite[Theorem 1, $\S\,$46, item
I]{Ku}). There is no loss of generality in assuming that
$|\gamma_1|_{[t_2, t_1]}|\subset G\setminus\overline{B(x_0, r_1)}.$
Put $\gamma_2=\gamma_1|_{[t_2, t_1]}.$ Observe that $\gamma_2$ is a
subcurve of $\gamma.$ By the said above, $\Gamma(C_1, C_0,
G)>\Gamma(S_1, S_2, A)$ and, consequently, $f(\Gamma(C_1, C_0,
G))>f(\Gamma(S_1, S_2, A)).$ Now, by (\ref{eq32*A}),
\begin{equation}\label{eq9}
M_{\alpha^{\,\prime}}(f(\Gamma(C_1, C_0, G)))\leqslant
M_{\alpha^{\,\prime}}(f(\Gamma(S_1, S_2, A)))\,.
\end{equation}
Combining (\ref{eq1C}) with (\ref{eq9}), we obtain (\ref{eq1B}).
\end{remark}

\medskip
We say that the boundary of the domain $G$ is {\it strongly
accessible at a point $x_0\in
\partial G$}, if, for every neighborhood  $U$ of the point $x_0$,
there is a compact set $E\subset G$, a neighborhood $V\subset U$ of
the point $x_0$ and a number $\delta
>0$ such that $$M_{\alpha}(\Gamma(E, F, G))\geqslant \delta$$ for every continuum
$F$ in $G$ intersecting $\partial U$ and $\partial V.$ We say that
the boundary $\partial G$  is {\it strongly accessible}, if the
corresponding property holds at every point of the boundary.

\medskip
Let $X$ and $Y$ be metric spaces. A mapping $f:X\rightarrow Y$ is
discrete if $f^{\,-1}(y)$ is discrete for all $y\in Y$ and $f$ is
open if it takes open sets onto open sets. Given a domain $D\subset
X,$ the {\it cluster set} of $f:D\rightarrow Y$ at $b\in \partial D$
is the set $C(f, b)$ of all points $z\in Y$ for which there exists a
sequence $\{b_k\}_{k=1}^{\infty}$ in $D$ such that $b_k\rightarrow
b$ and $f(b_k)\rightarrow z$ as $k\rightarrow\infty.$ For a
non-empty set $E\subset \partial D$ let $C(f, E)=\cup C(f, b),$
where $b$ ranges over set $E.$ A mapping $f:G\rightarrow Y$ is {\it
closed} in $G\subset X$ if $f(A)$ is closed in $f(G)$ whenever $A$
closed in $G.$ A mapping $f$ is {\it proper} if $f^{\,-1}(K)$ is
compact in $D$ whenever $K$ is a compact set of $f(D).$ A mapping
$f$ is {\it boundary preserving } if $C(f,
\partial D)\subset \partial f(D).$

\medskip
Let $D\subset X,$ $f:D \rightarrow X^{\,\prime}$ be a discrete open
mapping, $\beta: [a,\,b)\rightarrow X^{\,\prime}$ be  a curve, and
$x\in\,f^{-1}\left(\beta(a)\right).$ A curve $\alpha:
[a,\,c)\rightarrow D$ is called a {\it maximal $f$-lifting} of
$\beta$ starting at $x,$ if $(1)\quad \alpha(a)=x\,;$ $(2)\quad
f\circ\alpha=\beta|_{[a,\,c)};$ $(3)$\quad for
$c<c^{\prime}\leqslant b,$ there is no curves $\alpha^{\prime}:
[a,\,c^{\prime})\rightarrow D$ such that
$\alpha=\alpha^{\prime}|_{[a,\,c)}$ and $f\circ
\alpha^{\,\prime}=\beta|_{[a,\,c^{\prime})}.$ In the case
$X=X^{\,\prime}={\Bbb R}^n,$ the assumption on $f$ yields that every
curve $\beta$ with $x\in f^{\,-1}\left(\beta(a)\right)$ has  a
maximal $f$-lif\-ting starting at $x$ (see
\cite[Corollary~II.3.3]{Ri}, \cite[Lemma~3.12]{MRV}). Consider the
condition

\medskip\medskip\medskip
$\textbf{A}:$ {\bf for all $\beta: [a,\,b)\rightarrow X^{\,\prime}$
and $x\in f^{\,-1}\left(\beta(a)\right),$ a mapping $f:D\rightarrow
X^{\,\prime}$ has a maximal $f$-lif\-ting in $D$ starting at $x.$}

\medskip Let $G$ be a domain in a space $(X,d,\mu)$. Similarly to
\cite{IR}, we say that a function $\varphi:G\rightarrow{\Bbb R}$ has
{\it finite mean oscillation at a point $x_{0}\in\overline{G}$},
abbr. $\varphi \in FMO(x_{0})$, if
\begin{equation}\label{eq13.4.111} \overline{\lim\limits_{\varepsilon\rightarrow
0}}\,\, \,\frac{1}{\mu(B(x_{0},\varepsilon))}
\int\limits_{B(x_{0},\varepsilon)}|\varphi(x)-\overline{\varphi}_{\varepsilon}|\,\,d\mu(x)<\infty
\end{equation}
where $$\overline{\varphi}_{\varepsilon}
=\frac{1}{\mu(B(x_{0},\varepsilon))}
\int\limits_{B(x_{0},\varepsilon)}\varphi(x)\,\,d\mu(x)$$ is the
mean value of the function $\varphi(x)$ over the set
$$B(x_{0},\varepsilon)=\{x\in G: d(x,x_0)<\varepsilon\}$$ with
respect to the measure $\mu$. Here the condition (\ref{eq13.4.111})
includes the assumption that $\varphi$ is integrable with respect to
the measure $\mu$ over the set $B(x_0,\varepsilon)$ for some
$\varepsilon>0$.

\medskip
The following result holds.

\medskip
\begin{theorem}\label{th4}
{\sl\, Let $D$ and $D^{\,\prime}$ be domains with finite Hausdorff
dimensions $\alpha$ and $\alpha^{\,\prime}\geqslant 2$ in spaces
$(X,d,\mu)$ and $(X^{\,\prime},d^{\,\prime}, \mu^{\,\prime}),$
respectively. Assume that $X$ is complete and supports an
$\alpha$-Poincare inequality, and that the measure $µ$ is doubling.
Let $D$ be a bounded domain which is finitely connected at the
boundary, and let $Q:X\rightarrow (0, \infty)$ be a locally
integrable function. Suppose that $f:D\rightarrow D^{\,\prime},$
$D^{\,\prime}=f(D),$ is a discrete, closed and open ring $Q$-mapping
in $\partial D,$ for which ${\bf A}$-condition holds. Moreover,
suppose that $\partial D^{\,\prime}$ is strongly accessible and
$\overline{D^{\,\prime}}$ is compact in $X^{\,\prime}.$ Then $f$ has
a continuous extension $f:\overline{D}_P\rightarrow
\overline{D^{\,\prime}},$
$f(\overline{D}_P)=\overline{D^{\,\prime}},$ whenever $Q\in
FMO(\partial D).$ }
\end{theorem}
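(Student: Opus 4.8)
The plan is to prove the theorem by first establishing the crucial modulus estimate near the boundary, then using it to control oscillation of $f$, and finally upgrading pointwise continuity on $\partial D$ to continuity on the prime end closure via Proposition~\ref{pr2}.

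\medskip
First I would reduce the problem to showing that the cluster set $C(f, P)$ of $f$ at each prime end $P\in E_D$ is a single point, and that the resulting extension is continuous. By Proposition~\ref{pr2}, since $D$ is finitely connected at the boundary, $\overline{D}^P$ is metrizable and homeomorphic to $\overline{D}^M$ via a map $\Phi$ that is the identity on $D$; and by Proposition~\ref{pr3} every prime end has a singleton impression $\{x_0\}\subset\partial D$. So it suffices to work with a fixed $x_0\in\partial D$, take a sequence $x_k\to x_0$ in $D$ (in the $d_M$-sense corresponding to the prime end), and show $f(x_k)$ converges, with the limit independent of the sequence. The key analytic input is an oscillation estimate: for a ring $Q$-mapping at $x_0$ with $Q\in FMO(x_0)$, one uses the standard logarithmic-type choice $\eta(r)=\frac{1}{r\log(1/r)}\Big/\int_\varepsilon^{\varepsilon_0}\frac{dr}{r\log(1/r)}$ on a small ring $A(x_0,\varepsilon,\varepsilon_0)$. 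Plugging into \eqref{eq1C}, the FMO condition together with the doubling property of $\mu$ and the growth estimate \eqref{eq2} gives that $\int_{A(x_0,\varepsilon,\varepsilon_0)\cap D}Q(x)\eta^\alpha(d(x,x_0))\,d\mu(x)\to 0$, or at least stays bounded while $\int_\varepsilon^{\varepsilon_0}\eta(r)\,dr\to\infty$; the precise FMO-integral lemma (an analogue of the Ignat'ev--Ryazanov estimate) yields $M_{\alpha'}(f(\Gamma(S(x_0,\varepsilon),S(x_0,\varepsilon_0),A)))\to 0$ as $\varepsilon\to 0$.

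\medskip
Next I would convert this modulus decay into convergence of $f(x_k)$. Suppose for contradiction that $C(f,x_0)$ contains two distinct points $z_1,z_2\in\overline{D'}$. Since $f$ is discrete, open, closed and satisfies condition $\mathbf{A}$, and $\overline{D'}$ is compact, one can find sequences in $D$ accumulating to $x_0$ whose images approach $z_1$ and $z_2$ respectively; joining them by curves inside small rings $A(x_0,\varepsilon,\varepsilon_0)$ produces, via the $f$-liftings guaranteed by $\mathbf{A}$ and the boundary-preserving behaviour of closed open maps, a family of continua $F\subset D'$ that intersect fixed spheres/neighborhoods separating $z_1$ from $z_2$. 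Strong accessibility of $\partial D'$ then gives $M_{\alpha'}(\Gamma(E,F,D'))\geqslant\delta>0$ for a fixed compact $E\subset D'$ and all such $F$; but the image family $f(\Gamma(S(x_0,\varepsilon),S(x_0,\varepsilon_0),A))$ minorizes (is $>$) this family, so by \eqref{eq32*A} its modulus is $\geqslant\delta$, contradicting the decay established above once $\varepsilon$ is small. Hence $C(f,x_0)$ is a singleton, which defines $f(x_0)\in\overline{D'}$; since $C(f,\partial D)\subset\partial D'$ by the closedness/boundary-preserving property, in fact $f(x_0)\in\partial D'$, so $f(\overline{D}_P)\subseteq\overline{D'}$, and surjectivity onto $\overline{D'}$ follows from compactness of $\overline{D}_P$ together with closedness of $f$ and density of $f(D)=D'$.

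\medskip
Finally, continuity of the extended map $f:\overline{D}_P\to\overline{D'}$: given any sequence $y_j\to P$ in $\overline{D}_P$ (mixing interior points and prime ends), use the metric $m_P$ from Proposition~\ref{pr2} and a diagonal argument, picking interior points $x_{j}\in D$ with $m_P(x_j,y_j)\to 0$ and with $f(x_j)$ close to $f(y_j)$, then applying the singleton cluster-set result uniformly on small $m_P$-balls around $P$ — the uniformity comes from the fact that the modulus decay estimate depends only on the FMO-bound of $Q$ at the impression point and the doubling/Poincaré constants, which are locally uniform. I expect the main obstacle to be exactly this step of producing, from the lifting condition $\mathbf{A}$ and closedness/openness of $f$, continua $F$ in $D'$ that are genuinely ``large'' (meeting the two prescribed neighborhoods of $z_1,z_2$) so that strong accessibility applies — i.e. controlling the behaviour of maximal liftings near the boundary and ruling out that a lifting terminates in the interior; this is where discreteness, openness, and the closedness of $f$ must be combined carefully, in the spirit of the branched-mapping arguments of \cite{Sev$_1$}.
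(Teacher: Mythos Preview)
Your approach is essentially the same as the paper's: the paper factors the argument through an intermediate Lemma~\ref{lem1} (which is exactly your cluster-set contradiction via strong accessibility, liftings from condition~$\mathbf{A}$, and the ring $Q$-mapping estimate) together with Proposition~\ref{pr3A} (which supplies precisely your choice $\psi(t)=1/(t\log(1/t))$ from the FMO hypothesis), and your three paragraphs simply inline these two ingredients. Your identification of the delicate step---controlling maximal liftings so they terminate at the compact $f^{-1}(C_0')$ rather than escaping to $\partial D$---matches where the paper spends its effort, and your third-paragraph diagonal argument for continuity on all of $\overline{D}_P$ is actually more explicit than the paper's brief ``by metrizability it suffices''.
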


\medskip
By correspondence $[E_k]\mapsto f([E_k]),$ $[E_k]\in E_D,$
$f([E_k])\in \partial D^{\,\prime},$ we mean the following. If $x_k$
is a sequence with $x_k\rightarrow [E_k],$ $k\rightarrow\infty,$
then we set: $f([E_k]):=\lim\limits_{k\rightarrow\infty}f(x_k).$ The
statement of the Theorem~\ref{th4} includes that this limit exists,
and it does not depend on a sequence $x_k,$ which converges to
$[E_k]$ (see the Picture~2).

\begin{figure}[h]
\centerline{\includegraphics[scale=0.6]{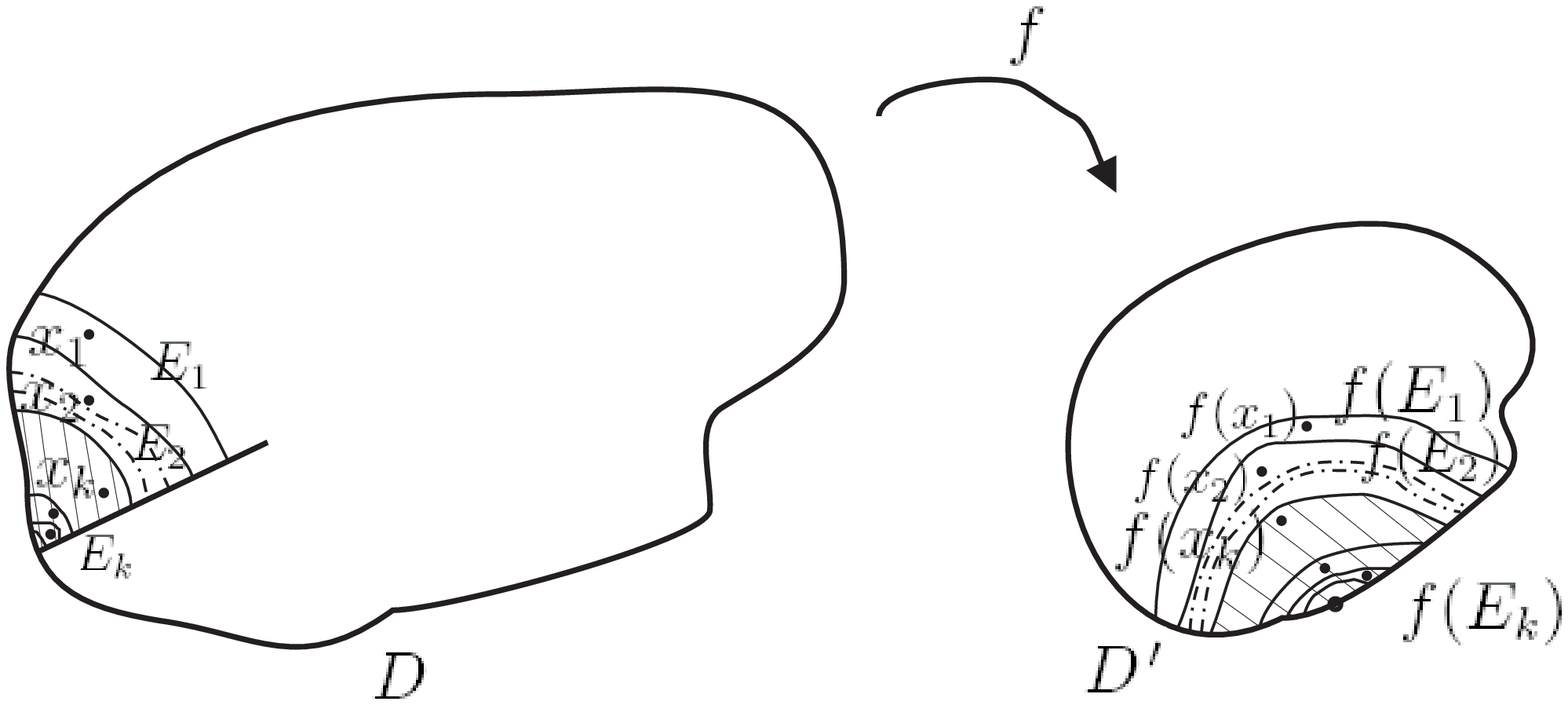}}
\centerline{Picture~2. A correspondence of prime ends and boundary
points under a mapping}
\end{figure}

\section{Main Lemma}

\medskip
The following statement holds (see also \cite[Theorem~3.3]{Vu} for
space ${\Bbb R}^n$).

\medskip
\begin{proposition}\label{pr4}{\sl\, Let $(X, d, \mu)$ be metric space
with Borel measure $\mu,$ and let $D$ be a domain in $X.$ Assume
that the measure $µ$ is doubling and $0 <\mu(B)<\infty$ for all
balls $B$ in $X.$ If $f:D\rightarrow X^{\,\prime}$ is a discrete,
closed and open mapping of $D$ onto a set $D^{\,\prime},$ then $f$
is boundary preserving. Moreover, $f^{\,-1}(K)$ is a compact for
every compact set $K\subset D^{\,\prime}.$ }
\end{proposition}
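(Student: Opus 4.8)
The plan is to prove the two assertions of Proposition~\ref{pr4} in turn, using the properness of $X$ (which holds because $\mu$ is doubling and $0<\mu(B)<\infty$, so closed balls are compact) together with the standard theory of discrete open closed mappings. I would first establish the second assertion — that $f^{\,-1}(K)$ is compact for every compact $K\subset D^{\,\prime}$ — since it is the technical heart of the argument and the boundary-preserving property will follow from it almost immediately.

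For the compactness of $f^{\,-1}(K)$, fix a compact $K\subset D^{\,\prime}=f(D)$ and let $\{x_m\}$ be any sequence in $f^{\,-1}(K)$. Since $\{f(x_m)\}\subset K$ and $K$ is compact, after passing to a subsequence we may assume $f(x_m)\to y_0\in K\subset D^{\,\prime}$. The key claim is that $\{x_m\}$ cannot escape to $\partial D$: if it had a subsequence converging (in the ambient proper space $X$, after a further extraction using that the whole configuration is bounded) to some point $b\in\partial D$, then by the closedness of $f$ one shows $f(b)\in\partial f(D)=\partial D^{\,\prime}$ in an appropriate sense, contradicting $f(x_m)\to y_0\in D^{\,\prime}$. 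More precisely, I would argue: the set $C=f^{\,-1}(K)$ has the property that $f$ restricted to $\overline{C}\cap D$ is proper onto $K$ by the closedness hypothesis, and a closed map with compact fibers (here $f^{\,-1}(y)$ is discrete and closed in $D$, hence, being also the preimage of a point under a closed map, compact) that is also open is proper; properness then gives that $f^{\,-1}(K)$ is compact. The cleanest route is: (i) $f$ discrete $\Rightarrow$ each fiber $f^{\,-1}(y)$ is discrete; (ii) $f$ closed $\Rightarrow$ each fiber is also closed in $D$, and in fact $f^{\,-1}(y)$ has no accumulation point in $D$ and (by closedness) no accumulation point on $\partial D$ mapping outside — combined with local compactness of $D$, the fiber is finite, hence compact; (iii) a closed continuous surjection with compact fibers is a proper map (a standard point-set topology fact), so $f^{\,-1}(K)$ is compact.

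For the boundary-preserving assertion $C(f,\partial D)\subset\partial f(D)$, suppose $z\in C(f,\partial D)$, so there are $b_k\in D$ with $b_k\to b\in\partial D$ and $f(b_k)\to z$. I must show $z\notin f(D)$, i.e. $z\in\partial f(D)$ (note $z\in\overline{f(D)}$ automatically). Assume for contradiction $z=f(x_0)$ for some $x_0\in D$. Choose a compact neighborhood $K$ of $z$ contained in $D^{\,\prime}=f(D)$ (possible since $D^{\,\prime}$ is open in $X^{\,\prime}$... or rather, since $D^{\,\prime}$ is a domain and $X^{\,\prime}$ proper, one takes $K=\overline{B(z,\varepsilon)}\cap D^{\,\prime}$ small enough to lie in $D^{\,\prime}$ using openness of the image — here one uses that $f$ open $\Rightarrow$ $f(D)$ open). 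Then eventually $f(b_k)\in K$, so $b_k\in f^{\,-1}(K)$, which is compact in $D$ by the first part; hence $\{b_k\}$ has a subsequential limit in $D$, contradicting $b_k\to b\in\partial D$ and the Hausdorff property. Therefore $z\notin f(D)$ and $f$ is boundary preserving.

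The main obstacle I anticipate is step (ii) above: carefully showing that the fibers are compact (equivalently, that a discrete fiber of a closed map into a proper space cannot have accumulation points escaping to the boundary), and correctly invoking the point-set fact that ``closed $+$ compact fibers $\Rightarrow$ proper'' in the non-compact setting of the domain $D$ — one must be attentive that $D$ is only locally compact, and that the closedness of $f$ is relative to $f(D)$. I would isolate this as the crucial lemma and handle it via the following observation: if $F\subset D$ is closed in $D$ and $f|_F$ is injective on a neighborhood of each of its points (discreteness) while $f(F)$ is closed, then any sequence in $f^{\,-1}(K)$ with $f$-values converging in $K$ has a convergent subsequence in $D$, for otherwise its limit point $b$ lies in $\partial D$ and the set $A=\{x_m\}\cup\{b\}$ is not closed in $D$, yet $f(A)$ would have to be closed in $f(D)$ — and $f(A)=\{f(x_m)\}\cup\{f(b)\}$ is problematic precisely because $f(b)$ need not be defined, forcing the contradiction through the closedness hypothesis applied to $\overline{\{x_m\}}\cap D$.
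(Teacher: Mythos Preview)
Your overall strategy reverses the paper's order: you want to establish properness first and deduce boundary preservation from it, whereas the paper proves boundary preservation directly and obtains properness as a corollary. Your deduction of boundary preservation \emph{from} properness (your final paragraph before the ``main obstacle'') is correct and clean. The problem lies entirely in your step~(ii), where you claim the fibers are compact.

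Here is the concrete gap. Suppose $f^{-1}(y_0)$ is infinite and accumulates at some $b\in\partial D$, say $x_m\to b$ with $f(x_m)=y_0$ for all $m$. The set $A=\{x_m\}$ is closed in $D$ (its only accumulation point lies outside $D$), but $f(A)=\{y_0\}$ is \emph{also} closed in $f(D)$, so the closedness hypothesis gives no contradiction. Your sketch ``$f(A)$ would have to be closed in $f(D)$ \ldots\ forcing the contradiction'' simply does not fire in this case, and the standard fact ``closed $+$ compact fibers $\Rightarrow$ proper'' is of no use until compact fibers are actually established. The same obstruction blocks your direct sequential argument for properness: if $x_m\in f^{-1}(K)$ with $x_m\to b\in\partial D$ and $f(x_m)\to y_0\in K$, the bad scenario is precisely $f(x_m)\equiv y_0$.

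The paper's proof handles exactly this scenario by a perturbation trick that you are missing: using the doubling hypothesis and $0<\mu(B)<\infty$ to conclude that singletons have measure zero (via~(\ref{eq2})), one finds arbitrarily close points $z_k\ne x_k$ with $z_k\to b$, and then discreteness of $f$ lets one choose $z_k$ so that $f(z_k)\ne y_0$ while still $f(z_k)\to y_0$. Now $\{z_k\}$ is closed in $D$ but $\{f(z_k)\}$ is not closed in $f(D)$ (it misses its limit $y_0$), contradicting closedness of $f$. This is the one genuinely nontrivial idea in the proof, and it is the only place the measure-theoretic hypotheses on $X$ are used; once you insert it, your reversed order works just as well as the paper's.
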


\medskip
\begin{proof}
Since $f$ is open, $D^{\,\prime}$ is a domain. Assume, to the
contrary, that $f$ is not boundary preserving. Then there exists
$x_0\in
\partial D$ and $y\in D^{\,\prime}$ such that $y\in C(f, x_0).$ Now,
we can find a sequence $x_k\rightarrow x_0$ as $k\rightarrow\infty,$
$x_k\in D,$ $k=1,2,\ldots ,$ such that $f(x_k)\rightarrow y$ as
$k\rightarrow\infty.$

\medskip
Without loss of generality, we can consider that $f(x_k)\ne y$ for
all $k=1,2\ldots .$ In fact, by continuity of $f,$ for every $k\in
{\Bbb N}$ there exists $\delta_k>0$ such that
\begin{equation}\label{eq3}
d^{\,\prime}(f(x), f(x_k))<1/k\quad \forall \,\,x\in B(x_k,
\delta_k)\,.
\end{equation}
We can consider that $B(x_k, \delta_k)\subset D$ and $\delta_k<1/k.$
Letting to the limit as $r\rightarrow 0$ in (\ref{eq2}) at
$y=x=x_0,$ we obtain that $\mu(\{x_k\})=0.$ Fix $i\in {\Bbb N}.$
Since, by assumption on $\mu,$ $\mu(B(x_k, \delta_k/2^{\,i}))>0,$ we
obtain that $B(x_k, \delta_k/2^{\,i})$ contains at least two points.
By increasing of $i,$ $i=1,2,\ldots,$ we obtain a sequence
$x_{ik}\in B(x_k, \delta_k/2^{\,i})$ such that $x_{ik}\rightarrow
x_k$ as $i\rightarrow\infty$ and $x_{ik}\ne x_k$ for every $i\in
{\Bbb N}.$ By discreteness of $f,$ we can consider that
$f(x_{ik})\ne y_0$ for all $i\in {\Bbb N}.$ Fix some such $i_0\in
{\Bbb N}$ and set $z_k:=x_{i_0k}.$ Now, by triangle inequality,
$$d(z_k, x_0)\leqslant d(z_k, x_k)+d(x_k, x_0)\rightarrow 0\,,\quad
k\rightarrow\infty\,,$$ and, simultaneously, by (\ref{eq3})
$$d^{\,\prime}(f(z_k), y)\leqslant d^{\,\prime}(f(z_k), f(x_k))+ d^{\,\prime}(f(x_k), y)<$$
$$<1/k+d^{\,\prime}(f(x_k), y)\rightarrow 0,\quad k\rightarrow\infty\,.$$

\medskip
So, $z_k\in D,$ $z_k\rightarrow x_0$ as $k\rightarrow\infty,$
$f(z_k)\rightarrow y$ as $k\rightarrow\infty,$ and $f(z_k)\ne y$ for
every $k\in {\Bbb N}.$ On the other hand, note that
$\{x_k\}_{k=1}^{\infty}$ is closed in $D,$ but
$\{f(x_k)\}_{k=1}^{\infty}$ is not closed in $f(D),$ because
$y\not\in \{f(x_k)\}_{k=1}^{\infty}.$ Now $f$ is not closed in $D$
that contradicts to conditions of Proposition. The contradiction
obtained above disproves that $f$ is not boundary preserving.

\medskip
It remains to show that $f^{\,-1}(K)$ is a compact for every compact
set $K\subset D^{\,\prime}.$ If this is not true, there exists a
sequence $x_k\in f^{\,-1}(K),$ such that $x_k\rightarrow x_0\in
\partial D.$ As was shown above, $f(x_k)\rightarrow y_0\in \partial D^{\,\prime},$ that contradicts to condition
$x_k\in f^{\,-1}(K).$~$\Box$
\end{proof}

\medskip
The following statement was proved in \cite[Lemma~5.1]{GRY} for
homeomorphisms in ${\Bbb R}^2.$

\medskip
\begin{lemma}\label{lem1}
{\sl Let $D$ and $D^{\,\prime}$ be domains with finite Hausdorff
dimensions $\alpha$ and $\alpha^{\,\prime}\geqslant 2$ in spaces
$(X,d,\mu)$ and $(X^{\,\prime},d^{\,\prime}, \mu^{\,\prime}),$
respectively. Assume that $X$ is complete and supports an
$\alpha$-Poincare inequality, and that the measure $µ$ is doubling.
Let $D$ be a bounded domain which is finitely connected at the
boundary, and let $Q:X\rightarrow (0, \infty)$ be a locally
integrable function. Suppose that $f:D\rightarrow D^{\,\prime},$
$D^{\,\prime}=f(D),$ is a discrete, closed and open ring $Q$-mapping
in $\partial D,$ for which ${\bf A}$-condition holds. Moreover,
suppose that $\partial D^{\,\prime}$ is strongly accessible and
$\overline{D^{\,\prime}}$ is compact in $X^{\,\prime}.$  Assume
that, for every $x_0\in
\partial D,$ there exists a Lebesgue measurable function $\psi:(0,
\infty)\rightarrow (0, \infty)$ such that
\begin{equation} \label{eq5B}
I(\varepsilon,
\varepsilon_0):=\int\limits_{\varepsilon}^{\varepsilon_0}\psi(t)dt <
\infty
\end{equation}
for every $\varepsilon\in (0,\varepsilon_0)$ and $I(\varepsilon,
\varepsilon_0)\rightarrow\infty$ as $\varepsilon\rightarrow 0,$ and

\begin{equation} \label{eq4*}
\int\limits_{\varepsilon<d(x,
x_0)<\varepsilon_0}Q(x)\cdot\psi^{\,\alpha}(d(x, x_0)) \
d\mu(x)\,=\,o\left(I^{\,\alpha}(\varepsilon, \varepsilon_0)\right)
\end{equation}
as $\varepsilon\rightarrow 0.$ Then $f$ has a continuous extension
$f:\overline{D}_P\rightarrow \overline{D^{\,\prime}},$
$f(\overline{D}_P)=\overline{D^{\,\prime}}.$ }
\end{lemma}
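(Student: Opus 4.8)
The plan is to reduce the lemma to a purely local statement about the behaviour of $f$ near a single boundary point, and then to run the standard modulus-estimate machinery adapted to prime ends. First I would fix a prime end $P=[E_k]\in E_D$ with (by Proposition~\ref{pr3}) singleton impression $\{x_0\}\subset\partial D$, and fix a sequence $x_k\to P$. Passing through the homeomorphism $\Phi$ of Proposition~\ref{pr2}, convergence $x_k\to P$ in $\overline D^P$ is the same as $x_k\to x_0$ in $(\overline\Omega^M,d_M)$, so in particular $d(x_k,x_0)\to 0$. The goal is to show $\{f(x_k)\}$ is a Cauchy sequence in $(X^{\,\prime},d^{\,\prime})$ and that its limit depends only on $P$; since $\overline{D^{\,\prime}}$ is compact, the limit exists in $\overline{D^{\,\prime}}$, and by Proposition~\ref{pr4} ($f$ boundary preserving) it lies in $\partial D^{\,\prime}$.

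The core estimate is the following: if $C(f,P)$ contained two distinct points $y_1\ne y_2$, one could find two subsequences $x_k',x_k''\to P$ with $f(x_k')\to y_1$, $f(x_k'')\to y_2$. Using that $P$ is a prime end, the impression is $\{x_0\}$, so a representative chain $\{E_k\}$ shrinks to $x_0$; I would extract continua $C_k\ni x_k'$ and $C_k'\ni x_k''$ inside $D$ each meeting a fixed pair of "spheres" $S(x_0,\varepsilon)$, $S(x_0,\varepsilon_0)$ for $k$ large (here the finite connectivity at the boundary, via the metric structure of $\overline\Omega^M$, is what lets one connect the $x_k'$ to points away from $x_0$ inside $D$ by acceptable-type sets). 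On the image side, $f(C_k)$ and $f(C_k')$ are continua in $D^{\,\prime}$ approaching $\partial D^{\,\prime}$ near $y_1,y_2$ respectively. Strong accessibility of $\partial D^{\,\prime}$ at $y_1$ then gives a compact $E\subset D^{\,\prime}$ and $\delta>0$ with $M_{\alpha^{\,\prime}}\bigl(\Gamma(E,f(C_k),D^{\,\prime})\bigr)\ge\delta$ for all large $k$. Lifting this family back by $f$ (this is where the $\mathbf A$-condition is used — maximal liftings exist, and since $f$ is closed the liftings do not escape to $\partial D$, so they join a fixed compact $f^{-1}(E)$ in $D$ to $C_k$), and combining with the minorization inequality \eqref{eq32*A}, one gets a lower bound $M_{\alpha^{\,\prime}}(f(\Gamma(S_\varepsilon,S_{\varepsilon_0},A)))\ge\delta'>0$ uniformly in small $\varepsilon$. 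On the other hand the ring $Q$-inequality \eqref{eq1C}, with the test function $\eta(t)=\psi(t)/I(\varepsilon,\varepsilon_0)$ (admissible by \eqref{eq5B}), bounds the same modulus by $\dfrac{1}{I^{\,\alpha}(\varepsilon,\varepsilon_0)}\displaystyle\int_{\varepsilon<d(x,x_0)<\varepsilon_0}Q(x)\psi^{\,\alpha}(d(x,x_0))\,d\mu(x)$, which by \eqref{eq4*} tends to $0$ as $\varepsilon\to0$. This contradiction shows $C(f,P)$ is a single point; the same argument applied to two sequences converging to the \emph{same} $P$ shows the limit is independent of the sequence, so $f([E_k])$ is well defined, and standard arguments (continuity of $f$ on $D$ plus the metrizability in Proposition~\ref{pr2}) upgrade this to continuity of the extension $f:\overline D_P\to\overline{D^{\,\prime}}$. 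Surjectivity $f(\overline D_P)=\overline{D^{\,\prime}}$ follows from $f(D)=D^{\,\prime}$ together with the fact that $f(\overline D_P)$ is compact (continuous image of a compact metric space) hence closed, and contains the dense set $D^{\,\prime}$, combined with boundary preservation.

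The main obstacle I anticipate is the geometric construction of the connecting continua $C_k,C_k'$ in $D$ and the verification that the relevant lifted path families indeed minorize $\Gamma(S_1,S_2,A)$ in the metric-space setting: in $\mathbb R^n$ one uses spherical shells freely, but here one must work with the Mazurkiewicz completion and the chain $\{E_k\}$ of the prime end, checking that for large $k$ the set $E_k$ is contained in a small ball $B(x_0,\varepsilon)$ while $x_k'\in E_k$, and that the portion of a connecting curve outside $\overline{B(x_0,r_1)}$ but inside $B(x_0,r_2)$ genuinely produces a subcurve in $\Gamma(S_1,S_2,A)$ — essentially the topological argument already carried out in Remark~\ref{rem1}, but now applied to continua built from the prime-end data. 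A secondary technical point is ensuring that the maximal $f$-liftings furnished by condition $\mathbf A$, together with closedness of $f$, cannot have their terminal points accumulate at $\partial D$; this is exactly the content of Proposition~\ref{pr4} (compactness of $f^{-1}(K)$) and must be invoked carefully to keep the lifted family anchored to a fixed compact subset of $D$.
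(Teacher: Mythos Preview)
Your overall strategy coincides with the paper's: assume $C(f,P)$ contains two points $y_0\ne z_0$, manufacture a lower bound on a suitable image-side modulus from strong accessibility at $y_0$, lift via condition~$\mathbf A$ (using Proposition~\ref{pr4} to keep the lifted endpoints inside a fixed compact of $D$), minorize by $\Gamma(S_1,S_2,A)$, and contradict the ring $Q$-estimate with $\eta=\psi/I(\varepsilon,\varepsilon_0)$. The surjectivity argument via compactness of $\overline D_P$ is also the same.

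However, your description of the key geometric step is off and, as written, would not trigger strong accessibility. You propose two separate continua $C_k\ni x_k'$ and $C_k'\ni x_k''$ each meeting \emph{both} spheres $S(x_0,\varepsilon)$ and $S(x_0,\varepsilon_0)$; but then the far end of $f(C_k)$ is uncontrolled, so there is no reason $f(C_k)$ intersects both $\partial U$ and $\partial V$, which is exactly what the strong-accessibility hypothesis requires of the test continuum $F$. The paper's construction is different and simpler: take preimages $x_k,x_k'\in E_k$ with $f(x_k)\to y_0$, $f(x_k')\to z_0$, and use that each $E_k$ may be taken open and path-connected to join $x_k$ to $x_k'$ by a \emph{single} curve $\gamma_k\subset E_k\subset B(x_0,2^{-k})$. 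Then $f(\gamma_k)$ is one continuum running from a point near $y_0$ (inside $V$) to a point near $z_0$ (outside $U$), so it necessarily meets both $\partial V$ and $\partial U$, and strong accessibility yields $M_{\alpha'}(\Gamma(C_0',f(\gamma_k),D'))\geqslant\delta$. After lifting, the paper shows the lifted curves land in the compact $f^{-1}(C_0')$, embeds that compact in a continuum $C_1\subset D\setminus\overline{B(x_0,\varepsilon_0)}$, and applies Remark~\ref{rem1} with $C_0=\gamma_k\subset\overline{B(x_0,2^{-k})}$; this is where the spheres enter, not in the construction of $\gamma_k$ itself. Once you replace your two stretched continua by this single short joining curve, the rest of your outline goes through verbatim.
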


\begin{proof} By Proposition \ref{pr2}, $\overline{D}_P$ is metrizable. Now,
by metrizability of $\overline{D}_P,$ it is sufficient to prove that
$$L=C(f, P):=\left\{y\in X^{\,\prime}: y=\lim\limits_{k\rightarrow\infty}f(x_k),x_k\rightarrow
P,x_k\in D\right\}$$
consists of single point $y_0\in\partial D^{\,\prime}.$ %Since $\mu$
%is doubling, $X$ is complete if and only if it is proper (i.e. every
%closed bounded set is compact), see \cite[Proposition~3.1]{BB}. So,
%$\overline{D^{\,\prime}}$ is compact in $X^{\,\prime}.$ Now we can
%consider that $f(x_k)\rightarrow y_0$ as $k\rightarrow \infty$ and
%some $y_0\in \overline{D^{\,\prime}}.$
Since $\overline{D^{\,\prime}}$ is a compact, $L\ne\varnothing.$ By
Proposition \ref{pr4}, $L\subset\partial D^{\,\prime}.$

\medskip
Assume, to the contrary, that $f$ cannot be extended to $P$
continuously. Now, we can find at least two points $y_0$ and $z_0\in
L$. Set $U=B(y_0,r_0)$, where $0<r_0<d(y_0, z_0)$. Now we can find a
sequences $y_k$ and $z_k$ in $f(E_k),$ $k=1,2,\ldots,$ $P=[E_k],$
such that $d(y_0, y_k)<r_0$ and $d(y_0, z_k)>r_0$ and, besides that,
$y_k\rightarrow y_0$ and $z_k\rightarrow z_0$ as
$k\rightarrow\infty$. By Remark 4.5 in \cite{ABBS} we can consider
that the sets $E_k$ are open. Moreover, by Remark 2.6 in \cite{ABBS}
the set $E_k$ is path connected for every $k\in {\Bbb N}.$

\medskip Denote $x_0:=I([E_k])$ (see Proposition \ref{pr3}).
Now we show that, for every $r>0$ there exists $k\in {\Bbb N}$ such
that
\begin{equation}\label{eq1}
E_k\subset B(x_0, r)\cap D\,.
\end{equation}
Assume, to the contrary, that there exists $r>0$ with the following
condition: for every $k\in {\Bbb N}$ there exists $x_k\in
E_k\setminus B(x_0, r).$ Since $\mu$ is doubling, $X$ is complete if
and only if it is proper (i.e. every closed bounded set is compact),
see \cite[Proposition~3.1]{BB}. Since $D$ is bounded, $\overline{D}$
is compact. Now, we can find a subsequence $x_{k_l}\in D$ with
$x_{k_l}\rightarrow \overline{x_0}$ as $l\rightarrow\infty$ for some
$\overline{x_0}\in\overline{D}.$ Given $i\in{\Bbb N},$ there exists
$l_0\in {\Bbb N}$ such that $k_l>i$ for every $l\geqslant l_0.$
Consequently, $x_{k_l}\in E_{k_l}\subset E_i$ for every $l\geqslant
l_0$ and thus, $\overline{x_0}\in \overline{E_i}.$ Since $i$ is
arbitrary, we obtain that $\overline{x_0}\in
\bigcap\limits_{i=1}^{\infty} \overline{E_i}=\{x_0\}.$ So,
$x_0=\overline{x_0}.$ It remains to show that $x_k\rightarrow x_0$
as $k\rightarrow\infty.$ Assume the contrary, then there exists a
subsequence $x_{m_l}\in D$ with $x_{m_l}\rightarrow \zeta_0$ as
$l\rightarrow\infty.$ Arguing as above, we obtain that
$\zeta_0=x_0,$ that disproves the contradiction mentioned above. Now
$x_k\rightarrow x_0$ as $k\rightarrow\infty$ and thus, $x_k\in
B(x_0, r).$ The inclusion (\ref{eq1}) have been proved.

\medskip
Since $y_k, z_k\in f(E_k),$ one can find at least two sequences
$x_k, x^{\,\prime}_k\in E_k$ such that $f(x_k)=y_k$ and
$f(x_k^{\,\prime})=z_k.$ By (\ref{eq1}) $x_k\rightarrow x_0$ and
$x^{\,\prime}_k\rightarrow x_0$ as $k\rightarrow\infty.$ According
to the definition of a strongly accessible boundary at a point
$y_0\in \partial D^{\,\prime},$ for any neighborhood $U$ of this
point one can find a compact set $C_0 \subset \partial
D^{\,\prime},$ a neighborhood $V$ of the point $y_0$ and a number
$\delta>0$ such that
\begin{equation}\label{eq1A}
M_{\alpha^{\,\prime}}(\Gamma(C_0^{\,\prime}, F,
D^{\,\prime}))\geqslant \delta
>0
\end{equation}
for an arbitrary continuum $F$ that intersects $\partial U$ and
$\partial V.$ By Proposition \ref{pr4},
$C:=f^{\,-1}(C_0^{\,\prime})$ is compact subset of $D.$
Consequently, $\delta_0={\rm dist}(x_0, C)>0.$ Then, without loss of
generality, we can assume that $C_0\cap\overline{B(x_0,
\varepsilon_0)}=\varnothing.$ Since $E_k$ is connected, the points
$x_k$ and $x^{\,\prime}_k$ can be connected by a curve $\gamma_k$
lying in $E_k.$ Since $f(x_k)=y_k\in V$ and
$f(x^{\,\prime}_k)=z_k\in D^{\,\prime}\setminus U$ for sufficiently
large $k\in {\Bbb N},$ one can find a number $k_0\in {\Bbb N}$ such
that, by virtue of (\ref{eq1A}),
\begin{equation}\label{eq2A}
M_{\alpha^{\,\prime}}(\Gamma(C_0^{\,\prime}, f(\gamma_k),
D^{\,\prime}))\geqslant \delta
>0
\end{equation}
for all $k\geqslant k_0.$ Let $\Gamma_k$ denote the family of all
semiopen curves $\beta_k:[a, b)\rightarrow D^{\,\prime}$ such that
$\beta_(a)\in f(\gamma_k),$ $\beta_k(t)\in D^{\,\prime}$ for all
$t\in [a, b),$ and
$$\lim\limits_{t\rightarrow b-0}\beta_k(t):=B_i\in C_0^{\,\prime}\,.$$
It is obvious that
\begin{equation}\label{eq4}
M_{\alpha^{\,\prime}}(\Gamma_k)=M_{\alpha^{\,\prime}}
\left(\Gamma\left(C_0^{\,\prime}, f(\gamma_k),
D^{\,\prime}\right)\right)\,.
\end{equation}
For each fixed $k\in {\Bbb N},$ $k\geqslant k_0,$ we consider the
family $\Gamma_k^{\,\prime}$ of maximal liftings $\alpha_k(t):[a,
c)\rightarrow D$ of the family $\Gamma_k$ with origin in the set
$\gamma_k.$ This family exists and is well defined by virtue of
${\bf A}$-condition. First, note that no curve  $\alpha_k(t)\in
\Gamma_k^{\,\prime},$ $\alpha_k:[a, c)\rightarrow D,$ can not tend
to the boundary of the domain $D$ as $t\rightarrow c-0$ by virtue of
the condition $C(f,
\partial D)\subset \partial D^{\,\prime}.$ Then  $C(\alpha_k(t), c)\subset
D.$ Now assume that the curve $\alpha_k(t)$  does not have a limit
as $t\rightarrow c-0.$

Consider
$$G=\left\{x\in X:\, x=\lim\limits_{k\rightarrow\,\infty}
\alpha(t_k)
 \right\}\,,\quad t_k\,\in\,[a,\,c)\,,\quad
 \lim\limits_{k\rightarrow\infty}t_k=c\,.$$
Letting to subsequences, if it is need, we can restrict us by
monotone sequences $t_k.$ For $x\in G,$ by continuity of $f,$
$f\left(\alpha(t_k)\right)\rightarrow\,f(x)$ as
$k\rightarrow\infty,$ where $t_k\in[a,\,c),\,t_k\rightarrow c$ as
$k\rightarrow \infty.$ However,
$f\left(\alpha(t_k)\right)=\beta(t_k)\rightarrow\beta(c)$ as
$k\rightarrow\infty.$ Thus, $f$ is a constant on $G.$ From other
hand, $\overline{\alpha}$ is a compact set, because
$\overline{\alpha}$ is a closed subset of the compact space
$\overline{D}$ (see \cite[Theorem~2.II.4, $\S\,41$]{Ku}). Now, by
Cantor condition on the compact $\overline{\alpha},$ by monotonicity
of $\alpha\left(\left[t_k,\,c\right)\right),$
$$G\,=\,\bigcap\limits_{k\,=\,1}^{\infty}\,\overline{\alpha\left(\left[t_k,\,c\right)\right)}
\ne\varnothing\,,
$$
%\end{equation}
%
see \cite[1.II.4, $\S\,41$]{Ku}. Now, by \cite[Theorem~5.II.5,
$\S\,47$]{Ku}, $\overline{\alpha}$ is connected. By discreteness of
$f,$ $G$ is a single-point set, and $\alpha\colon
[a,\,c)\rightarrow\,D$ extends to a closed curve $\alpha\colon
[a,\,c]\rightarrow D,$ and $f\left(\alpha(c)\right)=\beta(c).$

\medskip
Therefore, there exists $\lim\limits_{t\rightarrow
c-0}\alpha_k(t)=A_k\in D.$ Observe that, in this case, by the
definition of maximal lifting, we have $c=b.$ Then, on the one hand,
$\lim\limits_{t\rightarrow b-0}\alpha_k(t):=A_k,$ and, on the other
hand, by virtue of the continuity of the mapping $f$ in $D,$
$$f(A_k)=\lim\limits_{t\rightarrow b-0}f(\alpha_k(t))=\lim\limits_{t\rightarrow b-0}
\beta_k(t)=B_k\in C_0^{\,\prime}\,.$$
According to the definition of $C_0,$ this implies that $A_k$
belongs to $C_0.$ We imbed the compact set $C_0$ into a certain
continuum $C_1$ lying completely in the domain $D$ (see Lemma 1 in
\cite{Sm}). Taking a smaller value of $\varepsilon_0>0,$  we can
again assume that $C_1\cap\overline{B(x_0,
\varepsilon_0)}=\varnothing.$ Now we have that
$\Gamma_k^{\,\prime}\subset\Gamma(\gamma_k, C_1, D).$ Passing to a
subsequence, if necessary, we can consider that $x_k$ and
$x^{\,\prime}_k\in B(x_0, 2^{\,-k}).$ Observe that the function
$$\eta(t)=\left\{
\begin{array}{rr}
\psi(t)/I(2^{\,-k}, \varepsilon_0), &   t\in (2^{\,-k},
\varepsilon_0),\\
0,  &  t\in {\Bbb R}\setminus (2^{\,-k}, \varepsilon_0)\,,
\end{array}
\right. $$
where
$I(\varepsilon):=\int\limits_{\varepsilon}^{\varepsilon_0}\psi(t)dt,$
satisfies a normalization condition of the form (\ref{eq5B}).
Therefore, by Remark \ref{rem1} and conditions (\ref{eq5B}) and
(\ref{eq4*}), we get
\begin{equation}\label{eq11*}
M_{\alpha^{\,\prime}}\left(f\left(\Gamma_k^{\,\prime}\right)\right)\leqslant
M_{\alpha^{\,\prime}}(f(\Gamma(\gamma_k, C_1, D)))\leqslant
\Delta(k)\,,
\end{equation}
where $\Delta(k)\rightarrow 0$ as $k\rightarrow \infty.$ However,
$\Gamma_k=f(\Gamma_k^{\,\prime}).$ Therefore, using (\ref{eq11*}),
we conclude that
\begin{equation}\label{eq3A}
M_{\alpha^{\,\prime}}(\Gamma_k)=
M_{\alpha^{\,\prime}}\left(f(\Gamma_k^{\,\prime})\right)\leqslant
\Delta(k)\rightarrow 0\qquad \text{as}\qquad k\rightarrow\infty\,.
\end{equation}
Relation (\ref{eq3A}), together with equality (\ref{eq4}),
contradicts inequality (\ref{eq2A}), which proves the possibility of
continuous extension $f:\overline{D}_P\rightarrow
\overline{D^{\,\prime}}.$

\medskip
It remains to show that $f(\overline{D}_P)=\overline{D^{\,\prime}}.$
It is clear, that $f(\overline{D}_P)\subset\overline{D^{\,\prime}}.$
Now we show the inverse inclusion. Let $\zeta_0\in
\overline{D^{\,\prime}}.$ If $\zeta_0\in D^{\,\prime},$ then there
exists $\xi_0\in D$ with $f(\xi_0)=\zeta_0$ and, consequently,
$\zeta_0\in f(D).$ Assume that $\zeta_0\in
\partial D^{\,\prime}.$ Now there exists $\zeta_m\in D^{\,\prime},$
$\zeta_m=f(\xi_m),$ $\xi_m\in D,$ such that $\zeta_m\rightarrow
\zeta_0$ as $m\rightarrow\infty.$ By \cite[Theorem~10.10]{ABBS},
$\overline{D}_P$ is a compact metric space. Now, we can consider
that $\xi_m\rightarrow P_0$ as $m\rightarrow\infty,$ where $P_0$ is
some prime end in $\overline{D}_P.$ Now $\zeta_0\in
f(\overline{D}_P).$ The inclusion $\overline{D^{\,\prime}}\subset
f(\overline{D}_P)$ has been proved. Consequently,
$f(\overline{D}_P)=\overline{D^{\,\prime}}.$ Lemma is proved.
~$\Box$
\end{proof}

\section{Proof of the main result}

We will say that a space  $(X,d,\mu)$ is {\it upper $\alpha$-regular
at a point} $x_0\in X$ if there is a constant $C> 0$ such that
$$
\mu(B(x_0,r))\leqslant Cr^{\alpha}$$
for the balls $B(x_0,r)$ centered at $x_0\in X$ with all radii
$r<r_0$ for some $r_0>0.$ We will also say that a space  $(X,d,\mu)$
is {\it upper $\alpha$-regular} if the above condition holds at
every point $x_0\in X.$ The following statement can be found in
\cite[Lemma~4.1]{RS}.

\medskip
\begin{proposition}\label{pr3A}
{\sl Let $G$ be a domain Ahlfors $\alpha$-regular metric space $(X,
d, \mu)$ at $\alpha\geqslant 2.$ Assume that $x_0\in \overline{G}$
and $Q:G\rightarrow [0, \infty]$ belongs to $FMO(x_0).$ If
\begin{equation}\label{eq7}
\mu(G\cap B(x_0, 2r))\leqslant
\gamma\cdot\log^{\alpha-2}\frac{1}{r}\cdot \mu(G\cap B(x_0, r))
\end{equation}
for some $r_0>0$ and every $r\in (0, r_0),$ then $Q$
satisfies~\eqref{eq4*} at $x_0$ for some function $F(\varepsilon,
\varepsilon_0)$ such that $G(\varepsilon):=F(\varepsilon,
\varepsilon_0)/I^{\alpha}(\varepsilon, \varepsilon_0)$ obeying
\/{\em:} $G(\varepsilon)\rightarrow 0$ as $\varepsilon\rightarrow
0,$ and $\psi(t):=\frac{1}{t\log\frac{1}{t}}.$}
\end{proposition}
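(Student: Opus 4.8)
The plan is to use the choice $\psi(t)=\frac{1}{t\log(1/t)}$ (which only needs to be understood for $t$ near $0$) and to reduce \eqref{eq4*} to a dyadic estimate of the integral of $Q$ against the weight $\psi^{\alpha}$, in the spirit of the classical $FMO$ lemmas. First I would fix $\varepsilon_0>0$ so small that $\varepsilon_0<r_0$, that $\log(1/\varepsilon_0)>1$, that $Q$ is $\mu$-integrable over $G\cap B(x_0,\varepsilon_0)$ (possible since $Q\in FMO(x_0)$), and that by \eqref{eq13.4.111} there is a constant $C_*<\infty$ with $\frac{1}{\mu(G\cap B(x_0,\varepsilon))}\int_{G\cap B(x_0,\varepsilon)}|Q-\overline{Q}_{\varepsilon}|\,d\mu\leqslant C_*$ for all $\varepsilon\in(0,\varepsilon_0]$. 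A direct computation (substitute $u=\log(1/t)$) gives $I(\varepsilon,\varepsilon_0)=\log\log(1/\varepsilon)-\log\log(1/\varepsilon_0)$, which is finite for each $\varepsilon\in(0,\varepsilon_0)$ and tends to $\infty$ as $\varepsilon\rightarrow 0$; so the requirements in \eqref{eq5B} are met, and it remains to show $F(\varepsilon,\varepsilon_0):=\int\limits_{\varepsilon<d(x,x_0)<\varepsilon_0}Q(x)\psi^{\alpha}(d(x,x_0))\,d\mu(x)=o(I^{\alpha}(\varepsilon,\varepsilon_0))$ as $\varepsilon\rightarrow 0$.

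Put $\varepsilon_k:=2^{-k}\varepsilon_0$ and split $\{x\in G:\varepsilon<d(x,x_0)<\varepsilon_0\}$ into the dyadic rings $A_k:=\{x\in G:\varepsilon_{k+1}\leqslant d(x,x_0)<\varepsilon_k\}$, $0\leqslant k\leqslant N$, where $N=N(\varepsilon)$ satisfies $\varepsilon_{N+1}\leqslant\varepsilon<\varepsilon_N$, so that $N$ is comparable to $\log(1/\varepsilon)$. Write $\mu_k:=\mu(G\cap B(x_0,\varepsilon_k))$ and let $\overline{Q}_k$ be the mean value of $Q$ over $G\cap B(x_0,\varepsilon_k)$. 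The crucial step is a growth bound for these means: since $A_{k+1}\subset G\cap B(x_0,\varepsilon_{k+1})\subset G\cap B(x_0,\varepsilon_k)$,
$$|\overline{Q}_{k+1}-\overline{Q}_k|\,\leqslant\,\frac{1}{\mu_{k+1}}\int\limits_{G\cap B(x_0,\varepsilon_k)}|Q-\overline{Q}_k|\,d\mu\,\leqslant\,\frac{\mu_k}{\mu_{k+1}}\,C_*\,,$$
while \eqref{eq7} with $r=\varepsilon_{k+1}$ gives $\mu_k/\mu_{k+1}\leqslant\gamma\log^{\alpha-2}(1/\varepsilon_{k+1})\leqslant C\,k^{\alpha-2}$ for all large $k$. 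Telescoping from a fixed $k_0$, and using $\alpha\geqslant 2$ (so that $\sum_{i\leqslant k}i^{\alpha-2}$ is comparable to $k^{\alpha-1}$), one obtains $\overline{Q}_k\leqslant C'\,k^{\alpha-1}$ for all large $k$.

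Next I would estimate each ring contribution. On $A_k$ we have $\log(1/d(x,x_0))\geqslant\log(1/\varepsilon_k)\geqslant k\log 2$ and $d(x,x_0)^{\alpha}\geqslant\varepsilon_{k+1}^{\alpha}=2^{-\alpha}\varepsilon_k^{\alpha}$; Ahlfors $\alpha$-regularity gives $\mu_k\leqslant\mu(B(x_0,\varepsilon_k))\leqslant C\varepsilon_k^{\alpha}$; and the $FMO$ bound gives $\int_{A_k}Q\,d\mu\leqslant\int_{G\cap B(x_0,\varepsilon_k)}|Q-\overline{Q}_k|\,d\mu+\overline{Q}_k\,\mu_k\leqslant(C_*+C'k^{\alpha-1})\mu_k$. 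Combining these,
$$\int\limits_{A_k}\frac{Q(x)}{d(x,x_0)^{\alpha}\log^{\alpha}(1/d(x,x_0))}\,d\mu(x)\,\leqslant\,C_1\,\frac{k^{\alpha-1}\mu_k}{\varepsilon_k^{\alpha}\,k^{\alpha}}\,\leqslant\,\frac{C_2}{k}$$
for $k\geqslant k_0$. Summing over $k_0\leqslant k\leqslant N$ (the finitely many rings with $k<k_0$ contribute a finite constant, because $Q$ is integrable over $G\cap B(x_0,\varepsilon_0)$) yields $F(\varepsilon,\varepsilon_0)\leqslant C_3+C_2\sum_{k=k_0}^{N}\frac{1}{k}\leqslant C_4\log N\leqslant C_5\log\log(1/\varepsilon)$ for $\varepsilon$ small. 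Since $I^{\alpha}(\varepsilon,\varepsilon_0)=(\log\log(1/\varepsilon)-\log\log(1/\varepsilon_0))^{\alpha}$ and $\alpha\geqslant 2$, the quotient $G(\varepsilon)=F(\varepsilon,\varepsilon_0)/I^{\alpha}(\varepsilon,\varepsilon_0)$ is of order $\log\log(1/\varepsilon)/(\log\log(1/\varepsilon))^{\alpha}=(\log\log(1/\varepsilon))^{1-\alpha}$, which tends to $0$ as $\varepsilon\rightarrow 0$; this is precisely \eqref{eq4*} with the stated $F$ and $\psi$.

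The step I expect to be the main obstacle is the growth bound $\overline{Q}_k=O(k^{\alpha-1})$ for the dyadic means. In the genuinely doubling situation $FMO$ alone gives $\overline{Q}_k=O(k)$; the role of \eqref{eq7} is precisely that the admissible logarithmic loss $\log^{\alpha-2}(1/r)$ weakens this to $O(k^{\alpha-1})$ while still keeping every ring contribution of size $O(1/k)$ and the total of size $O(\log\log(1/\varepsilon))$, which is $o(I^{\alpha})$ exactly because $\alpha\geqslant 2$. A subsidiary point to watch is that the mean in \eqref{eq13.4.111} is normalized by $\mu(G\cap B(x_0,\varepsilon))$, so what is used in the telescoping is the quasi-doubling of $r\mapsto\mu(G\cap B(x_0,r))$ furnished by \eqref{eq7}, rather than doubling of the ambient balls; Ahlfors regularity enters only through the one-sided estimate $\mu_k\leqslant C\varepsilon_k^{\alpha}$.
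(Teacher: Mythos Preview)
The paper does not actually prove this proposition; it is quoted without argument from \cite[Lemma~4.1]{RS}. Your dyadic decomposition with the weight $\psi(t)=1/(t\log(1/t))$, the telescoping bound $\overline{Q}_k=O(k^{\alpha-1})$ obtained from the quasi-doubling hypothesis \eqref{eq7}, and the resulting harmonic-series estimate $F(\varepsilon,\varepsilon_0)=O(\log\log(1/\varepsilon))$ are exactly the standard $FMO$ argument that underlies the cited result, and the proof is correct as written.
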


\medskip
{\it Proof of the Theorem~\ref{th4}} follows from Lemma~\ref{lem1}
and Proposition~\ref{pr3A}. Indeed, $X$ is upper regular by
(\ref{eq2}), and (\ref{eq7}) holds because the measure $µ$ is
doubling by assumptions. So, the desired statement follows from
the Lemma~\ref{lem1}.~$\Box$

\section{Homeomorphic extension to the boundary}

Now we prove results about homeomorphic extension of mappings to the
boundary in terms of prime ends.

\medskip
Let us give the following definition (see
\cite[section~13.3]{MRSY}). Let $(X,d,\mu)$ be metric space with
finite Hausdorff dimension $\alpha\geqslant 1.$ We say that the
boundary of $D$ is {\it weakly flat} at a point $x_0\in
\partial D$ if, for every number $P > 0$ and every neighborhood $U$
of the point $x_0,$ there is a neighborhood $V\subset U$ such that
$M_{\alpha}(\Gamma(E, F, D))\geqslant  P$ for all continua $E$ and
$F$ in $D$ intersecting $\partial U$ and $\partial V.$ We say that
the boundary $\partial D$ is weakly flat if the corresponding
property holds at every point of the boundary.
Given $P\in E_D$ and $f:D\rightarrow X^{\,\prime},$ set
$$L=C(f, P):=\left\{y\in X^{\,\prime}:
y=\lim\limits_{k\rightarrow\infty}f(x_k),x_k\rightarrow P,x_k\in
D\right\}\,.$$
Analog of the following lemma was proved in \cite[Lemma~13.4]{MRSY}
(see also \cite[Lemma~4]{KR} and \cite[Lemma~5]{Sm}).

\medskip
\begin{lemma}\label{lem2}
{\sl Let $D$ and $D^{\,\prime}$ be domains with finite Hausdorff
dimensions $\alpha$ and $\alpha^{\,\prime}\geqslant 2$ in spaces
$(X,d,\mu)$ and $(X^{\,\prime},d^{\,\prime}, \mu^{\,\prime}),$
respectively. Assume that $X$ is complete and supports an
$\alpha$-Poincare inequality, and that the measure $µ$ is
doubling. Let $D$ be a bounded domain which is finitely connected
at the boundary, and let $Q:X\rightarrow (0, \infty)$ be
integrable function in $D,$ $Q(x)\equiv 0$ for $x\in X\setminus
D.$ Suppose that $f:D\rightarrow D^{\,\prime},$
$D^{\,\prime}=f(D),$ is a ring $Q$-homeomorphism in $\partial D,$
moreover, suppose that $\partial D^{\,\prime}$ is weakly flat and
$\overline{D^{\,\prime}}$ is compact in $X^{\,\prime}.$ If $P_1$
and $P_2$ are different prime ends in $E_D,$ then $C(f, P_1)\cap
C(f, P_2)=\varnothing.$}
\end{lemma}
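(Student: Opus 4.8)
The strategy is a proof by contradiction modelled on the argument for Lemma~\ref{lem1}. Suppose $P_1\ne P_2$ in $E_D$ but there exists a common cluster point $y_0\in C(f,P_1)\cap C(f,P_2)$; by Proposition~\ref{pr4} we have $y_0\in\partial D^{\,\prime}$. Using Proposition~\ref{pr2} we may work with the metric $m_P$ on $\overline{D}_P$, so $P_1$ and $P_2$ have disjoint neighborhoods; let $x_0^{(1)}:=I(P_1)$ and $x_0^{(2)}:=I(P_2)$ be the (singleton, by Proposition~\ref{pr3}) impressions. First I would record the key separation fact: because $P_1\ne P_2$, for representative chains $\{E_k^{(1)}\}$ and $\{E_k^{(2)}\}$ neither divides the other, so after passing to subchains we may assume ${\rm dist}\,(\Omega\cap\partial E_k^{(1)},\Omega\cap\partial E_k^{(2)})>0$ uniformly, and in particular $\overline{E_k^{(1)}}\cap\overline{E_k^{(2)}}=\varnothing$ for all large $k$; this is what lets me treat the two sides independently. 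As in Lemma~\ref{lem1}, for every $r>0$ there is $k$ with $E_k^{(j)}\subset B(x_0^{(j)},r)\cap D$, $j=1,2$.

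Next, pick sequences $x_k\to P_1$, $x_k'\to P_2$ with $f(x_k)\to y_0$ and $f(x_k')\to y_0$; concretely choose $x_k\in E_k^{(1)}$, $x_k'\in E_k^{(2)}$ with $f(x_k),f(x_k')$ both eventually inside a small ball $V$ around $y_0$ and, by weak flatness, with some auxiliary ball $U\supset V$ that the images of connecting curves must cross. Since each $E_k^{(j)}$ is connected (indeed path-connected by Remark~2.6 in \cite{ABBS}), join $x_k$ to $x_k'$? — no: here the point is the opposite, I connect $x_k$ to $x_k'$ only through $D$, not through a single $E_k$. Instead, the clean route is to use weak flatness of $\partial D^{\,\prime}$ at $y_0$: for arbitrary $P>0$ there is a neighborhood $V\subset U$ of $y_0$ such that $M_{\alpha^{\,\prime}}(\Gamma(E,F,D^{\,\prime}))\geqslant P$ for all continua $E,F$ in $D^{\,\prime}$ meeting $\partial U$ and $\partial V$. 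Since $f$ is a homeomorphism, connect $x_k$ to a point mapped near $y_0$ by a curve inside $E_k^{(1)}$ and $x_k'$ similarly inside $E_k^{(2)}$; the images $f(\gamma_k^{(1)})$ and $f(\gamma_k^{(2)})$ are continua in $D^{\,\prime}$ that (for large $k$) meet $\partial V$ and $\partial U$, hence $M_{\alpha^{\,\prime}}(\Gamma(f(\gamma_k^{(1)}),f(\gamma_k^{(2)}),D^{\,\prime}))\geqslant P$ for every fixed $P$ and all large $k$.

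On the other hand, $\Gamma(f(\gamma_k^{(1)}),f(\gamma_k^{(2)}),D^{\,\prime})=f(\Gamma(\gamma_k^{(1)},\gamma_k^{(2)},D))$, and because $\gamma_k^{(1)}\subset E_k^{(1)}\subset B(x_0^{(1)},2^{-k})$ while $\gamma_k^{(2)}\subset E_k^{(2)}\subset B(x_0^{(2)},2^{-k})$ with $x_0^{(1)}\ne x_0^{(2)}$, for all large $k$ every curve of $\Gamma(\gamma_k^{(1)},\gamma_k^{(2)},D)$ must cross the two spheres $S(x_0^{(1)},2^{-k})$ and $S(x_0^{(1)},\rho_0)$ where $\rho_0:=\tfrac13 d(x_0^{(1)},x_0^{(2)})$, so this family is minorized by $\Gamma(S(x_0^{(1)},2^{-k}),S(x_0^{(1)},\rho_0),A(x_0^{(1)},2^{-k},\rho_0))$. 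Hence the ring $Q$-inequality applies at $x_0^{(1)}$: with the test function $\eta(t)=\psi(t)/I(2^{-k},\rho_0)$ on $(2^{-k},\rho_0)$ as in Lemma~\ref{lem1} (using that $Q$ is integrable in $D$ and the FMO/log estimate of Proposition~\ref{pr3A} to guarantee a $\psi$ with divergent integral and the $o(I^\alpha)$ bound), we get $M_{\alpha^{\,\prime}}(f(\Gamma(\gamma_k^{(1)},\gamma_k^{(2)},D)))\leqslant \Delta(k)\to 0$. This contradicts the lower bound $\geqslant P$, completing the proof.

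The main obstacle, and the step needing the most care, is verifying that $f(\gamma_k^{(1)})$ and $f(\gamma_k^{(2)})$ genuinely meet both $\partial U$ and $\partial V$ so that weak flatness delivers the uniform lower modulus bound: I must choose the connecting curves $\gamma_k^{(j)}$ inside $E_k^{(j)}$ so their images run from points close to $y_0$ (inside $V$) out to points that escape $U$ (using that $z_0$-type points, or simply that $y_k$ and $z_k$ are chosen on opposite sides of $\partial U$ as in the proof of Lemma~\ref{lem1}), and argue via \cite[Theorem~1,~\S\,46,~item~I]{Ku} that a connected set meeting both the inside and outside of $U$ meets $\partial U$. The rest is a direct transcription of the modulus estimates from Lemma~\ref{lem1}, now based at $x_0^{(1)}$ with the fixed outer radius $\rho_0$ determined by the separation of the two impressions.
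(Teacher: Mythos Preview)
There are two genuine gaps in your argument.

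\textbf{First, and most seriously, you assume the impressions are distinct.} You set $\rho_0:=\tfrac13\,d(x_0^{(1)},x_0^{(2)})$ and use it as a fixed outer radius for the ring estimate. But two different prime ends can perfectly well have the \emph{same} impression point (think of the two sides of a slit in a slit disk); Proposition~\ref{pr3} only says each impression is a singleton, not that the map $P\mapsto I(P)$ is injective. If $x_0^{(1)}=x_0^{(2)}$ your $\rho_0$ vanishes and the minorization $\Gamma(\gamma_k^{(1)},\gamma_k^{(2)},D)>\Gamma(S(x_0^{(1)},2^{-k}),S(x_0^{(1)},\rho_0),A)$ collapses. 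The paper avoids this by never comparing the two impressions: it fixes $k_0$ with $E_{k_0}\cap G_{k_0}=\varnothing$, sets $r_0:={\rm dist}(x_0,\partial E_{k_0+1}\cap D)>0$, and shows that any curve from $E_{m_0+1}$ to $G_{m_0+1}$ must cross $\partial E_{k_0+1}\cap D$ and hence the full annulus $A(x_0,r_0/2,r_0)$. That is the separation you actually need, and it comes from the chain structure, not from $x_0^{(1)}\ne x_0^{(2)}$.

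\textbf{Second, you invoke a $\psi$ with divergent integral and an $o(I^{\alpha})$ bound, appealing to Proposition~\ref{pr3A} ``to guarantee'' it.} But the hypotheses of Lemma~\ref{lem2} include neither FMO nor the condition~\eqref{eq4*}; they only assume $Q\in L^{1}(D)$. The paper's argument uses this in an essentially different way: with the \emph{fixed} annulus $A(x_0,r_0/2,r_0)$ it takes the constant test function $\eta\equiv 2/r_0$ and gets the \emph{finite} upper bound
\[
M_{\alpha^{\,\prime}}\bigl(f(\Gamma(D_0,D_*,D))\bigr)\leqslant (2/r_0)^{\alpha}\,\Vert Q\Vert_{L^1(D)}=:M_0<\infty,
\]
and then exploits the full strength of weak flatness (the number $P$ can be taken larger than $M_0$) to derive the contradiction. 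Your scheme of sending the inner radius to $0$ so that the modulus tends to $0$ is precisely the Lemma~\ref{lem1} mechanism, and it requires the extra hypothesis on $Q$ that is absent here. A related side effect is your use of weak flatness: since both cluster sets contain $y_0$ you cannot, as in Lemma~\ref{lem1}, pick endpoints on opposite sides of $\partial U$; the paper instead shows $f(D_0)$ and $f(D_*)$ each meet some sphere $S(y_0,l_0)$ (and trivially meet smaller spheres), and uses those intersections to manufacture the continua to which weak flatness applies.
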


\begin{proof}
Assume that $C_1\cap C_2\ne\varnothing$, where $C_i=C(f, P_i),$
$i=1,2$. Now, there exists $y_0\in C_1\cap C_2.$

{\bf I.} Let $P_1=[E_k],$ $k=1,2,\ldots,$ and $P_2=[G_l],$
$l=1,2,\ldots, .$ By Remark 4.5 in \cite{ABBS} we can consider that
the sets $E_k$ and $G_l$ are open. By Remark 2.6 in \cite{ABBS} the
sets $E_k$ and $G_l$ is path connected for every $k, l\in {\Bbb N}.$

Let us to show that there exists $k_0\in {\Bbb N}$ such that
\begin{equation}\label{eq12}
E_k\cap G_k=\varnothing\quad \forall\,\,k\geqslant k_0\,.
\end{equation}
Suppose the contrary, i.e., suppose that for every $l=1,2,\ldots$
there exists an increasing sequence $k_l,$ $l=1,2,\ldots,$ such
that $x_{k_l}\in E_{k_l}\cap G_{k_l},$ $l=1,2,\ldots .$ Now
$x_{k_l}\rightarrow P_1$ and $x_{k_l}\rightarrow P_2,$
$l\rightarrow\infty.$ Let $m_P$ be the metric on $\overline{D}_P$
defined in Proposition \ref{pr2}. By triangle inequality,
$$m_P(P_1, P_2)\leqslant m_P(P_1, x_{k_l})+m_P(x_{k_l}, P_2)
\rightarrow 0,\qquad l\rightarrow\infty\,,$$
that contradicts to Proposition \ref{pr2}. Thus, (\ref{eq12}) holds,
as required.

\medskip
{\bf II.} Denote $x_0:=I([E_k])$ (see Proposition \ref{pr3}).
Arguing as in the proof of Lemma \ref{lem1}, we can show that, for
every $r>0$ there exists $N\in {\Bbb N}$ such that
\begin{equation}\label{eq10} E_k\subset
B(x_0, r)\cap D\quad \forall\,\, k\geqslant N\,.
\end{equation}
Since $D$ is connected and $E_{k_0+1}\ne D,$ we obtain that
$\partial E_{k_0+1}\cap D\ne\varnothing$ (see \cite[Ch.~5, $\S\,$46,
item I]{Ku}). Set $r_0:=d(x_0,
\partial E_{k_0+1}\cap D).$ Since $\overline{E_{k_0}}$ is compact, $r_0>0.$
By (\ref{eq10}), there exists $m_0\in
{\Bbb N},$ $m_0>k_0+1,$ such that
\begin{equation}\label{eq10A} E_k\subset
B(x_0, r_0/2)\cap D\quad \forall\,\, k\geqslant m_0\,.
\end{equation}
{\bf III.} Set $D_0:=E_{m_0+1},$ $D_*:=G_{m_0+1}.$ Let us to show
that
\begin{equation}\label{eq11}
\Gamma(D_0, D_*, D)>\Gamma(S(x_0, r_0/2), S(x_0, r_0), A(x_0, r_0/2,
r_0))\,,
\end{equation}
where $A(x_0, r_1, r_2)$ is defined in (\ref{eq15}).
Assume that $\gamma\in \Gamma(D_0, D_*, D),$ $\gamma:[0,
1]\rightarrow D.$ Set
$$|\gamma|:=\{x\in D: \exists\,t\in[0, 1]:
\gamma(t)=x\}\,.$$
By (\ref{eq12}), $|\gamma|\cap E_{k_0+1}\ne\varnothing\ne
|\gamma|\cap (D\setminus E_{k_0+1}).$ Thus,
\begin{equation}\label{eq13}
|\gamma|\cap \partial E_{k_0+1}\ne\varnothing
\end{equation} (see \cite[Theorem 1, $\S\,$46, item I]{Ku}).
Moreover, observe that
\begin{equation}\label{eq14}
\gamma(1)\not\in \partial E_{k_0+1}\,.
\end{equation}
Suppose the contrary, i.e., that $\gamma(1)\in \partial
E_{k_0+1}.$ By definition of prime end, $\partial E_{k_0+1}\cap
D\subset \overline{E_{k_0}}.$ Since ${\rm dist}\,(D\cap \partial
E_{k+1}, D\cap \partial E_k)>0$ for all $k=1, 2,\ldots,$ we obtain
that $\partial E_{k_0+1}\cap D\subset E_{k_0}.$ Now, we have that
$\gamma(1)\in E_{k_0}$ and, simultaneously, $\gamma(1)\in
G_{m_0+1}\subset G_{k_0}.$ The last relations contradict with
(\ref{eq12}). Thus, (\ref{eq14}) holds, as required.

\medskip
By (\ref{eq10A}), we obtain that $|\gamma|\cap B(x_0,
r_0/2)\ne\varnothing.$ We prove that $|\gamma|\cap (D\setminus
B(x_0, r_0/2))\ne\varnothing.$ In fact, if it is not true, then
$\gamma(t)\in B(x_0, r_0/2)$ for every $t\in [0, 1].$ However, by
(\ref{eq13}) we obtain that $(\partial E_{k_0+1}\cap D)\cap B(x_0,
r_0/2)\ne \varnothing,$ that contradicts to the definition of
$r_0.$ Thus, $|\gamma|\cap (D\setminus B(x_0,
r_0/2))\ne\varnothing,$ as required. Now, by \cite[Theorem 1,
$\S\,$46, item I]{Ku}, there exists $t_1\in (0, 1]$ with
$\gamma(t_1)\in S(x_0, r_0/2).$ We can consider that
$t_1=\max\{t\in [0, 1]: \gamma(t)\in S(x_0, r_0/2)\}.$ We prove
that $t_1\ne 1.$ Suppose the contrary, i.e., suppose that $t_1=1.$
Now, we obtain that $\gamma(t)\in B(x_0, r_0/2)$ for every $t\in
[0, 1).$ From other hand, by (\ref{eq13}) and (\ref{eq14}), we
obtain that $\partial E_{k_0+1}\cap B(x_0, r_0/2)\ne \varnothing,$
which contradicts to the definition of $r_0.$ Thus, $t_1\ne 1,$ as
required. Set $\gamma_1:=\gamma|_{[t_1, 1]}.$

\medskip
By the definition, $|\gamma_1|\cap B(x_0, r_0)\ne\varnothing.$ We
prove that $|\gamma_1|\cap (D\setminus B(x_0,
r_0))\ne\varnothing.$ In fact, assume the contrary, i.e., assume
that $\gamma_1(t)\in B(x_0, r_0)$ for every $t\in [t_1, 1].$ Since
$\gamma(t)\in B(x_0, r_0/2)$ for $t<t_1,$ by (\ref{eq13}) we
obtain that $|\gamma_1|\cap
\partial E_{k_0+1}\ne\varnothing.$ Consequently, $B(x_0, r_0)\cap (\partial E_{k_0+1}\cap D)\ne\varnothing,$
that contradicts to the definition of $r_0.$ Thus, $|\gamma_1|\cap
(D\setminus B(x_0, r_0))\ne\varnothing,$ as required. Now, by
\cite[Theorem 1, $\S\,$46, item I]{Ku}, there exists $t_2\in (t_1,
1]$ with $\gamma(t_2)\in S(x_0, r).$ We can consider that
$t_2=\min\{t\in [t_1, 1]: \gamma(t)\in S(x_0, r_0)\}.$ We put
$\gamma_2:=\gamma|_{[t_1, t_2]}.$ Observe that $\gamma>\gamma_2$ and
$\gamma_2\in\Gamma(S(x_0, r_0/2), S(x_0, r_0), A(x_0, r_0/2, r_0)).$
Thus, (\ref{eq11}) has been proved.

\medskip
{\bf IV.} Consider the function
$$\eta(t)=\left\{
\begin{array}{rr}
2/r_0, &   t\in (r_0/2, r_0),\\
0,  &  t\in {\Bbb R}\setminus (r_0/2, r_0)\,.
\end{array}
\right. $$
Note that $\eta$ satisfies (\ref{eq*3!!}) with $r_1:=r_0/2$ and
$r_2:=r_0.$ Set $S_1:=S(x_0, r_0/2),$ $S_2:=S(x_0, r_0),$ $A:=A(x_0,
r_0/2, r_0).$ Thus, by (\ref{eq32*A}), (\ref{eq1B}) and
(\ref{eq11}), we obtain that
\begin{equation}\label{eq6}
M_{{\,\alpha}^{\,\prime}}(f(\Gamma(D_0, D_*, D)))\leqslant
M_{{\,\alpha}^{\,\prime}}(f(\Gamma(S_1, S_2,
A)))\leqslant\left(\frac{2}{r_0}\right)^{\alpha}\cdot\Vert
Q\Vert_{L^1(D)}<\infty\,.
\end{equation}
Set $M_0:=\left(\frac{2}{r_0}\right)^{\alpha}\cdot\Vert
Q\Vert_{L^1(D)},$ $0<M_0<\infty.$ Now, by (\ref{eq6}) we obtain that
\begin{equation}\label{eq6A}
M_{{\alpha}^{\,\prime}}(f(\Gamma(D_0, D_*, D)))\leqslant M_0\,.
\end{equation}
{\bf V.} Let us to show that there exists $l_0>0$ such that
\begin{equation}\label{eq16}S(y_0,l_0)\cap f(D_0)\ne\varnothing, \quad S(y_0,l_0)\cap
f(D_*)\ne\varnothing\,.
\end{equation}
In fact, since $y_0\in C_1\cap C_2,$ we obtain that $y_0\in
\overline{f(D_0)}.$ Now, given $r_1>0,$ there exists $x_1\in
B(y_0, r_1)\cap f(D_0).$ Similarly, $y_0\in \overline{f(D_*)},$
and there exists $x_2\in B(y_0, r_1)\cap f(D_*).$ Set
$l_0:=\min\{d^{\,\prime}(y_0, x_1), d^{\,\prime}(y_0, x_2)\}.$ We
have that $f(D_0)\cap B(y_0, l_0)\ne\varnothing\ne f(D_0)\setminus
B(y_0, l_0)$ and $f(D_*)\cap B(y_0, l_0)\ne\varnothing\ne
f(D_*)\setminus B(y_0, l_0).$ By \cite[Theorem 1, $\S\,$46, item
I]{Ku} we obtain (\ref{eq16}), as required.

\medskip
Since $\partial D^{\,\prime}$ is weakly flat, there exists
$r_*\in(0, l_0)$ such that
\begin{equation}\label{eq17}
M_{\alpha^{\,\prime}}(\Gamma(E, F, D^{\,\prime}))> M_0
\end{equation}
for each continua $E$ and $F$ in $D^{\,\prime}$ such that $E\cap
S(y_0, l_0)\ne\varnothing\ne E\cap S(y_0,r_*)$ and $F\cap S(y_0,
l_0)\ne\varnothing\ne F\cap S(y_0,r_*).$ By (\ref{eq16}) there
exist curves $c_1$ and $c_2,$ which join $S(y_0, l_0)$ and
$S(y_0,r_*)$ in domains $f(D_0)$ and $f(D_*),$ correspondingly.
Put $E:=c_1$ and $F:=c_2.$ Observe that $\Gamma(c_1, c_2,
D^{\,\prime})\subset f(\Gamma(D_0, D_*, D)).$ Now, by (\ref{eq17})
we obtain that
\begin{equation*}\label{e:9.4a}
M_0< M_{\alpha^{\,\prime}}(\Gamma(c_1, c_2, D^{\,\prime}))\leqslant
M_{\alpha^{\,\prime}}(f(\Gamma(D_0, D_*, D)))\,,\end{equation*} that
contradicts (\ref{eq6A}). Thus, $C(f, P_1)\cap C(f,
P_2)=\varnothing,$ as required.~$\Box$
\end{proof}

\medskip
There are two important statements which follow from Lemma
\ref{lem2}.

\medskip
\begin{theorem}\label{th1}
{\sl Under conditions of Lemma \ref{lem2}, $f^{\,-1}$ has a
continuous extension
$\overline{f^{\,-1}}:\overline{D^{\,\prime}}\rightarrow\overline{D}_P$
such that
$\overline{f^{\,-1}}(\overline{D^{\,\prime}})=\overline{D}_P.$}
\end{theorem}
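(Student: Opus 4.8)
\textbf{Proof proposal for Theorem~\ref{th1}.}

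The plan is to build the extension $\overline{f^{\,-1}}$ pointwise on $\overline{D^{\,\prime}}$ and then verify continuity and surjectivity. On $D^{\,\prime}$ we simply set $\overline{f^{\,-1}}=f^{\,-1},$ which makes sense since $f$ is a homeomorphism of $D$ onto $D^{\,\prime}$ (openness plus the homeomorphism hypothesis). The real content is at boundary points. Let $\zeta_0\in\partial D^{\,\prime}.$ First I would show that the cluster set $C(f^{\,-1},\zeta_0)$ consists of a single prime end. Take any sequence $\zeta_m\in D^{\,\prime}$ with $\zeta_m\to\zeta_0;$ since $\overline{D}_P$ is a compact metric space (by \cite[Theorem~10.10]{ABBS} together with Proposition~\ref{pr2}), the points $\xi_m:=f^{\,-1}(\zeta_m)$ have a subsequence converging to some prime end $P_0\in E_D.$ This shows $C(f^{\,-1},\zeta_0)\ne\varnothing.$ To see it is a singleton, suppose two prime ends $P_1,P_2\in C(f^{\,-1},\zeta_0).$ Then there are sequences $\xi_m^{(1)}\to P_1$ and $\xi_m^{(2)}\to P_2$ in $D$ with $f(\xi_m^{(i)})\to\zeta_0,$ so $\zeta_0\in C(f,P_1)\cap C(f,P_2),$ contradicting Lemma~\ref{lem2} unless $P_1=P_2.$ Hence $\overline{f^{\,-1}}(\zeta_0)$ is well defined as this unique prime end, and it does not depend on the approximating sequence by the same Lemma~\ref{lem2} argument.

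Next I would prove continuity of $\overline{f^{\,-1}}$ on $\overline{D^{\,\prime}}.$ The metric on $\overline{D}_P$ is $m_P$ from Proposition~\ref{pr2}, and the metric on $\overline{D^{\,\prime}}$ is the restriction of $d^{\,\prime}.$ Suppose continuity fails at some $\zeta_0\in\overline{D^{\,\prime}};$ then there is $\varepsilon>0$ and a sequence $w_m\to\zeta_0$ in $\overline{D^{\,\prime}}$ with $m_P(\overline{f^{\,-1}}(w_m),\overline{f^{\,-1}}(\zeta_0))\geqslant\varepsilon.$ For each $m$ pick $\zeta_m\in D^{\,\prime}$ close enough to $w_m$ that $d^{\,\prime}(\zeta_m,\zeta_0)\to 0$ and, using the definition of $\overline{f^{\,-1}}(w_m)$ as a prime-end limit, also $m_P(f^{\,-1}(\zeta_m),\overline{f^{\,-1}}(w_m))<\varepsilon/2;$ this is the standard diagonal reduction to interior points. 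Then $f^{\,-1}(\zeta_m)\in D$ and $\zeta_m\to\zeta_0,$ so by the very definition of $\overline{f^{\,-1}}(\zeta_0)$ we get $m_P(f^{\,-1}(\zeta_m),\overline{f^{\,-1}}(\zeta_0))\to 0,$ forcing $m_P(\overline{f^{\,-1}}(w_m),\overline{f^{\,-1}}(\zeta_0))<\varepsilon$ for large $m,$ a contradiction. (Here one uses that $\overline{D}_P$ is compact metric, so sequential continuity suffices.)

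Finally, surjectivity onto $\overline{D}_P.$ The inclusion $\overline{f^{\,-1}}(\overline{D^{\,\prime}})\subset\overline{D}_P$ is immediate. For the reverse, note $D=f^{\,-1}(D^{\,\prime})\subset\overline{f^{\,-1}}(\overline{D^{\,\prime}}),$ so it remains to hit every prime end $P\in E_D.$ Given $P,$ take $x_k\to P$ in $D;$ then $\zeta_k:=f(x_k)\in D^{\,\prime}.$ Since $\overline{D^{\,\prime}}$ is compact, pass to a subsequence with $\zeta_k\to\zeta_0\in\overline{D^{\,\prime}}.$ If $\zeta_0\in D^{\,\prime}$ then $x_k=f^{\,-1}(\zeta_k)\to f^{\,-1}(\zeta_0)\in D,$ contradicting $x_k\to P\in E_D$ (interior points and prime ends are distinct in $\overline{D}_P$); hence $\zeta_0\in\partial D^{\,\prime},$ and then $P\in C(f^{\,-1},\zeta_0)=\{\overline{f^{\,-1}}(\zeta_0)\},$ so $P=\overline{f^{\,-1}}(\zeta_0)$ lies in the image. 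Therefore $\overline{f^{\,-1}}(\overline{D^{\,\prime}})=\overline{D}_P.$

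The main obstacle is the well-definedness step: showing that the prime-end valued cluster set $C(f^{\,-1},\zeta_0)$ is a single point. This is exactly where Lemma~\ref{lem2} is essential --- without the disjointness of cluster sets $C(f,P_1)\cap C(f,P_2)=\varnothing$ for distinct prime ends there would be no way to rule out $\overline{f^{\,-1}}$ being multivalued at $\partial D^{\,\prime}.$ Everything else is a routine compactness-and-diagonalization argument in the metric spaces $\overline{D}_P$ and $\overline{D^{\,\prime}},$ using that both are compact metric (Proposition~\ref{pr2}, \cite[Theorem~10.10]{ABBS}, and compactness of $\overline{D^{\,\prime}}$).
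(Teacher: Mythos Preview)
Your proof is correct and follows essentially the same route as the paper's: well-definedness of $\overline{f^{\,-1}}$ on $\partial D^{\,\prime}$ via Lemma~\ref{lem2} and compactness of $\overline{D}_P$ (\cite[Theorem~10.10]{ABBS}), continuity by the diagonal approximation through interior points, and surjectivity onto $E_D$ via compactness of $\overline{D^{\,\prime}}.$ Your surjectivity step is in fact slightly more careful than the paper's, since you explicitly rule out $\zeta_0\in D^{\,\prime}$ rather than simply asserting $\zeta_0\in\partial D^{\,\prime}.$
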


\medskip
\begin{proof}
Let us to show that, given $\zeta_0\in \partial D^{\,\prime},$ a set
$C(f^{\,-1}, \zeta_0)$ is a single point $\xi_0\in E_D,$ where $E_D$
denotes prime ends space of $D.$ In fact, assume that
$x_k\stackrel{d^{\,\prime}}{\rightarrow} \zeta_0$ as
$k\rightarrow\infty$ and $y_k
\stackrel{d^{\,\prime}}{\rightarrow}\zeta_0$ as
$k\rightarrow\infty.$ By \cite[Theorem~10.10]{ABBS},
$\overline{D}_P$ is a compact metric space. Thus, we can consider
that $f^{\,-1}(x_k)\rightarrow P_1\in E_D$ and
$f^{\,-1}(y_k)\rightarrow P_2\in E_D$ as $k\rightarrow\infty.$ If
$P_1\ne P_2,$ then $\zeta_0\in C(f, P_1)\cap C(f, P_2)$ that
contradicts to the lemma \ref{lem2}.

Thus, we have the extension $\overline{f^{\,-1}}$ of $f^{\,-1}$ on
$\overline{D^{\,\prime}}$ such that $C(f^{\,-1},
\partial D^{\,\prime})\subset \overline{D}_P\setminus D.$ Let us
to show that $C(\overline{f^{\,-1}}, \partial
D^{\,\prime})=\overline{D}_P\setminus D.$ Given $P_0\in E_D,$ we can
find $x_m\rightarrow P_0$ as $m\rightarrow \infty.$ Since $\mu$ is
doubling, $X$ is complete if and only if it is proper (i.e. every
closed bounded set is compact), see \cite[Proposition~3.1]{BB}.
Since $D$ is bounded, $\overline{D}$ is compact. By assumptions of
the theorem, $\overline{D^{\,\prime}}$ is compact, as well. Thus, we
may assume that $x_m\rightarrow x_0\in
\partial D$ and $f(x_m)\stackrel{d^{\,\prime}}{\rightarrow} \zeta_0\in
\partial D^{\,\prime}$ and $m\rightarrow\infty.$ Thus, $P_0\in C(f^{\,-1},
\zeta_0),$ as required.

Finally, let us to show that
$\overline{f^{\,-1}}:\overline{D^{\,\prime}}\rightarrow
\overline{D}_P$ is continuous in $\overline{D^{\,\prime}}.$ In fact,
assume that $\zeta_m\rightarrow \zeta_0$ as $m\rightarrow\infty,$
$\zeta_m, \zeta_0\in \overline{D^{\,\prime}}.$ If $\zeta_0\in
D^{\,\prime},$ the desired conclusion is obvious. Now, assume that
$\zeta_0\in
\partial D^{\,\prime}.$ We choose $\zeta_m^*\in D^{\,\prime}$
such that $d^{\,\prime}(\zeta_m,\zeta_m^*)<1/m$ and
$m_P(\overline{f^{\,-1}}(\zeta_m),
\overline{f^{\,-1}}(\zeta_m^*))<1/m,$ where $m_P$ is the metric
defined in the proposition \ref{pr2}. Since
$\zeta_m^*\stackrel{d^{\,\prime}}{\rightarrow} \zeta_0,$  we obtain
that $\overline{f^{\,-1}}(\zeta_m^*)\rightarrow
\overline{f^{\,-1}}(\zeta_0)$ as $m\rightarrow\infty.$ Thus,
$\overline{f^{\,-1}}(\zeta_m)\rightarrow
\overline{f^{\,-1}}(\zeta_0),$ as required.~$\Box$
\end{proof}

\medskip
{\bf Example.} Given $n\geqslant 2,$ $p\geqslant 1$ and $\alpha\in
\left(0, n/p(n-1)\right),$ set
$$
f(x)=\frac{1+|x|^{\alpha}}{|x|}\cdot x\,,\quad x\in {\Bbb
B}^n\setminus\{0\}\,.$$
It is not difficult to see that $f$ is a ring $Q$-homeomorphism of
${\Bbb B}^n\setminus\{0\}$ onto $A:=\{1<|y|<2\},$ where
$Q(x):=\left(\frac{1+r^{\,\alpha}}{\alpha
r^{\,\alpha}}\right)^{n-1},$ $r=|x|$ (see, e.g.,
\cite[Proposition~6.3]{MRSY}).  Moreover, $Q\in L^p({\Bbb B}^n).$ It
is clear that $A$ has a locally quasiconformal boundary, so, we can
consider that all prime ends in $A$ are single points of $\partial
A$ (see~\cite{Na}). Observe that $f$ has no continuous extension at
0, however, the inverse mapping
$f^{\,-1}(y)=\frac{y}{|y|}(|y|-1)^{1/\alpha}$ is continuous in
$\overline{A}.$ In particular, $f^{\,-1}({\Bbb S}^{n-1})=0.$ Thus,
the statement of the Theorem~\ref{th1} is not valid for $f,$ but is
valid for $f^{\,-1}.$ In this case, $Q\not\in FMO(0).$

\medskip
Combining Theorem \ref{th4} with Lemma \ref{lem2}, we obtain the
following statement.

\medskip
\begin{theorem}\label{th3}
{\sl\, Let $D$ and $D^{\,\prime}$ be domains with finite Hausdorff
dimensions $\alpha$ and $\alpha^{\,\prime}\geqslant 2$ in spaces
$(X,d,\mu)$ and $(X^{\,\prime},d^{\,\prime}, \mu^{\,\prime}),$
respectively. Assume that $X$ is complete and supports an
$\alpha$-Poincare inequality, and that the measure $µ$ is
doubling. Let $D$ be a bounded domain which is finitely connected
at the boundary, and let $Q:X\rightarrow (0, \infty)$ be an
integrable function in $D.$ Suppose that $f:D\rightarrow
D^{\,\prime},$ $D^{\,\prime}=f(D),$ is a ring $Q$-homeomorphism in
$\partial D.$ Moreover, suppose that $\partial D^{\,\prime}$ is
weakly flat and $\overline{D^{\,\prime}}$ is compact in
$X^{\,\prime}.$ Then $f$ has a homeomorphic extension
$f:\overline{D}_P\rightarrow \overline{D^{\,\prime}},$
$f(\overline{D}_P)=\overline{D^{\,\prime}},$ whenever $Q\in
FMO(\partial D).$ }
\end{theorem}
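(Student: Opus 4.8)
The plan is to derive Theorem~\ref{th3} by combining the already-proved ingredients, with no new estimates needed. A ring $Q$-homeomorphism is in particular a discrete, closed and open ring $Q$-mapping (a homeomorphism onto its image is trivially discrete, open, and, since $D^{\,\prime}=f(D)$ and both domains have compact closures, closed), so all the hypotheses of Theorem~\ref{th4} are met once we observe that the $\mathbf{A}$-condition is automatic for a homeomorphism (each curve $\beta$ has a unique lift, namely $f^{\,-1}\circ\beta$, and one checks it extends to a maximal lift using properness of $f$, which holds by Proposition~\ref{pr4}). Also a weakly flat boundary is strongly accessible: taking $P=\delta$ in the definition of weak flatness at $x_0$ immediately yields the required lower modulus bound, with the compact set $E$ chosen as any continuum joining $\partial U$ to $\partial V$ inside $D^{\,\prime}$. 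Hence Theorem~\ref{th4} applies and gives a continuous extension $f\colon\overline{D}_P\to\overline{D^{\,\prime}}$ with $f(\overline{D}_P)=\overline{D^{\,\prime}}$.

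Next I would invoke Lemma~\ref{lem2}: its hypotheses coincide with those of Theorem~\ref{th3} (ring $Q$-homeomorphism, $\partial D^{\,\prime}$ weakly flat, $\overline{D^{\,\prime}}$ compact, $Q$ integrable in $D$), so for distinct prime ends $P_1,P_2\in E_D$ we get $C(f,P_1)\cap C(f,P_2)=\varnothing$. Combined with the continuous extension from the previous paragraph, this says precisely that the extended map, still denoted $f$, is injective on $E_D$; since $f$ is already injective on $D$ and, by Proposition~\ref{pr4} (boundary preserving), maps $D$ into $D^{\,\prime}$ and $E_D$ into $\partial D^{\,\prime}$, the extension $f\colon\overline{D}_P\to\overline{D^{\,\prime}}$ is a continuous bijection.

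It then remains to check that the inverse is continuous, i.e.\ that $f$ is a homeomorphism. Here I would apply Theorem~\ref{th1}, whose hypotheses are again exactly those of Lemma~\ref{lem2} (hence of Theorem~\ref{th3}): it furnishes a continuous extension $\overline{f^{\,-1}}\colon\overline{D^{\,\prime}}\to\overline{D}_P$ with $\overline{f^{\,-1}}(\overline{D^{\,\prime}})=\overline{D}_P$. Since $\overline{f^{\,-1}}$ restricted to $D^{\,\prime}$ is the genuine inverse of $f$ restricted to $D$, and both $\overline{f^{\,-1}}\circ f$ and $f\circ\overline{f^{\,-1}}$ are continuous maps agreeing with the identity on the dense subsets $D$ and $D^{\,\prime}$ of the metrizable (by Proposition~\ref{pr2}) compact spaces $\overline{D}_P$ and $\overline{D^{\,\prime}}$, they are the identity everywhere. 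Thus $\overline{f^{\,-1}}=f^{\,-1}$ and $f$ is a homeomorphism onto $\overline{D^{\,\prime}}$, with $f(\overline{D}_P)=\overline{D^{\,\prime}}$.

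The only genuinely delicate point is the verification of the $\mathbf{A}$-condition for a homeomorphism, needed to legitimately cite Theorem~\ref{th4}: one must show that the unique lift $f^{\,-1}\circ\beta$ of a half-open curve $\beta\colon[a,b)\to D^{\,\prime}$ either is already defined on all of $[a,b)$ or runs out to $\partial D$, so that no proper extension exists — this is where properness of $f$ (Proposition~\ref{pr4}) and the already-established fact $C(f,\partial D)\subset\partial D^{\,\prime}$ do the work, exactly as in the maximal-lifting discussion inside the proof of Lemma~\ref{lem1}. Everything else is routine bookkeeping with the cited results. Alternatively, one can bypass Theorem~\ref{th4} entirely and rerun the argument of Lemma~\ref{lem1} directly in the homeomorphic setting, but citing Theorem~\ref{th4} is cleaner.
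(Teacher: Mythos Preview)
Your approach is essentially the paper's: the paper states only that Theorem~\ref{th3} follows by ``combining Theorem~\ref{th4} with Lemma~\ref{lem2}'', and you have correctly unpacked that combination (together with its corollary Theorem~\ref{th1}) and supplied the routine verifications (weak flatness $\Rightarrow$ strong accessibility, homeomorphisms satisfy the $\mathbf{A}$-condition, density argument for the inverse). One small remark: closedness of a homeomorphism $f:D\to D'=f(D)$ is immediate from continuity of $f^{-1}$ and needs no compactness, and the $\mathbf{A}$-condition is even simpler than you indicate since the lift $f^{-1}\circ\beta$ is already defined on all of $[a,b)$.
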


\section{Equicontinuity of families of homeomorphisms}

Now we prove that the corresponding families of ring
$Q$-homeomorphisms are equicontinuous in $\overline{D}_P=D\cup E_D,$
where $E_D$ is a prime ends space. In this section, we restrict us
by a case of homeomorphisms, only. Let us recall some definitions.
Let $(X, d)$ and $\left(X^{\,\prime}, d^{\,\prime}\right)$ be metric
spaces with distances $d$ and $d^{\,\prime}$, respectively. A family
$\frak{F}$ of mappings $f:X\rightarrow X^{\,\prime}$  is said to be
{\it equicontinuous at a point} $x_0 \in X$ if for every
$\varepsilon > 0$ there is $\delta
> 0$ such that $d^{\,\prime} (f(x),f(x_0))<\varepsilon$ for all $f
\in \frak{F}$ and $x \in X$ with $d(x, x_0)<\delta$. The family
$\frak{F}$ is {\it equicontinuous} if $\frak{F}$ is equicontinuous
at every point $x_0 \in X.$ In what follows, $X=\overline{D}_P$
and $d=m_P,$ where $m_P$ is defined in Proposition \ref{pr2}. The
next definition can be found, e.g., in \cite{NP}. A domain $D$ is
called a {\it uniform} domain if, for each $r>0,$ there is
$\delta>0$ such that $M_{\alpha}(\Gamma(F, F^{\,*},
D))\geqslant\delta$ whenever $F$ and $F^{\,*}$ are continua of $D$
with $d(F)\geqslant r$ and $d(F^{\,*})\geqslant r.$ Domains $D_i,$
$i\in I,$ are said to be {\it equi-uniform} domains if, for $r>0,$
the modulus condition above is satisfied by each $D_i$ with the
same number $\delta.$

\medskip
Given $\delta> 0,$ $D\subset X$ and a measurable function
$Q:D\rightarrow[0, \infty],$ denote $\frak{R}_{Q, \delta}(D)$ the
family of all ring $Q$-homeomorphisms $f:D\rightarrow
X^{\,\prime}\setminus K_f$ in $D,$ such that $f(D)$ is some open
set in $X^{\,\prime}$ and $d^{\,\prime}(K_f)=\sup\limits_{x,y\in
K_f}d^{\,\prime}(x, y)\geqslant \delta,$ where $K_f\subset
X^{\,\prime}$ is a continuum. The following statement holds.

\medskip
\begin{lemma}\label{lem4}
{\sl Let $(X,d,\mu)$ and $\left(X^{\,\prime},d^{\,\prime},
\mu^{\,\prime}\right)$ be metric spaces, let $D$ be a domain in $X$
with finite Hausdorff dimension $\alpha\geqslant 2,$ and let
$X^{\,\prime}$ be a domain with finite Hausdorff dimension
$\alpha^{\,\prime}\geqslant 2.$ Given $x_0\in D,$ assume that, there
exists a Lebesgue measurable function $\psi:(0, \infty)\rightarrow
(0, \infty)$ such that
%
%\begin{equation} \label{eq20}
%
%
%
$$I(\varepsilon,
\varepsilon_0):=\int\limits_{\varepsilon}^{\varepsilon_0}\psi(t)dt
< \infty$$
%
%
%\end{equation}
%
for every $\varepsilon\in (0,\varepsilon_0)$ and $I(\varepsilon,
\varepsilon_0)\rightarrow\infty$ as $\varepsilon\rightarrow 0,$ and

\begin{equation} \label{eq21}
\int\limits_{\varepsilon<d(x,
x_0)<\varepsilon_0}Q(x)\cdot\psi^{\,\alpha}(d(x, x_0)) \
d\mu(x)\,=\,o\left(I^{\,\alpha}(\varepsilon, \varepsilon_0)\right)
\end{equation}
as $\varepsilon\rightarrow 0.$ If $X$ is locally path connected and
locally compact space, and $X^{\,\prime}$ is a uniform domain, then
$\frak{R}_{Q, \delta}(D)$ is equicontinuous at $x_0.$}
\end{lemma}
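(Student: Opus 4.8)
The plan is to argue by contradiction, forcing the $\alpha^{\,\prime}$-modulus of a suitable curve family to $0$ by means of the ring $Q$-inequality at $x_0$ together with \eqref{eq21}, while the uniformity of $X^{\,\prime}$ keeps that very modulus bounded away from $0.$ So assume $\frak{R}_{Q,\delta}(D)$ is not equicontinuous at $x_0.$ Then there are $a>0,$ mappings $f_m\in\frak{R}_{Q,\delta}(D)$ and points $x_m\to x_0$ in $D$ with $d^{\,\prime}(f_m(x_m),f_m(x_0))\geqslant a$ for all $m;$ recall that $f_m\colon D\to X^{\,\prime}\setminus K_{f_m}$ with $K_{f_m}\subset X^{\,\prime}$ a continuum and $d^{\,\prime}(K_{f_m})\geqslant\delta.$ Using local compactness of $X$ at the interior point $x_0,$ I would fix $\varepsilon_0>0$ so small that $\overline{B(x_0,\varepsilon_0)}$ is a compact subset of $D$ (since $I(\varepsilon,\varepsilon_0)\to\infty$ as $\varepsilon\to0,$ this shrinking of $\varepsilon_0$ does not spoil \eqref{eq21}). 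Since $X$ is locally path connected, each ball $B(x_0,\varepsilon)$ contains a path connected neighbourhood of $x_0,$ so there is $\rho(\varepsilon)>0$ such that every point of $B(x_0,\rho(\varepsilon))$ is joined to $x_0$ by a continuum contained in $B(x_0,\varepsilon).$ Passing to a subsequence I obtain numbers $\varepsilon_m\in(0,\varepsilon_0),$ $\varepsilon_m\to0,$ and continua $E_m\subset B(x_0,\varepsilon_m)$ with $x_0,x_m\in E_m$ (here $x_m\neq x_0$ because $a>0$); then $f_m(E_m)$ is a continuum in the open set $f_m(D)\subset X^{\,\prime}$ with $d^{\,\prime}(f_m(E_m))\geqslant d^{\,\prime}(f_m(x_m),f_m(x_0))\geqslant a.$

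Next, the upper bound. Set $A_m:=A(x_0,\varepsilon_m,\varepsilon_0)$ and $\Gamma_m:=\Gamma(S(x_0,\varepsilon_m),S(x_0,\varepsilon_0),A_m).$ Applying the (interior) ring $Q$-inequality at $x_0$ in the form of \eqref{eq1C} provided by membership in $\frak{R}_{Q,\delta}(D),$ with the weight $\eta_m(t):=\psi(t)/I(\varepsilon_m,\varepsilon_0)$ on $(\varepsilon_m,\varepsilon_0)$ and $\eta_m\equiv0$ elsewhere (so that $\int_{\varepsilon_m}^{\varepsilon_0}\eta_m(t)\,dt=1$), and using $A_m\subset B(x_0,\varepsilon_0)\subset D,$ I obtain
$$M_{\alpha^{\,\prime}}\big(f_m(\Gamma_m)\big)\leqslant\frac{1}{I^{\,\alpha}(\varepsilon_m,\varepsilon_0)}\int\limits_{\varepsilon_m<d(x,x_0)<\varepsilon_0}Q(x)\,\psi^{\,\alpha}(d(x,x_0))\,d\mu(x)\,,$$
where the right-hand side tends to $0$ as $m\to\infty$ by \eqref{eq21}. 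Hence $M_{\alpha^{\,\prime}}(f_m(\Gamma_m))\to0.$

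For the lower bound, since $X^{\,\prime}$ is a uniform domain, its defining property applied with $r:=\min\{a,\delta\}$ to the continua $f_m(E_m)$ and $K_{f_m}$ (each of $d^{\,\prime}$-diameter $\geqslant r$) produces a number $\delta_*>0,$ independent of $m,$ with $M_{\alpha^{\,\prime}}\big(\Gamma(f_m(E_m),K_{f_m},X^{\,\prime})\big)\geqslant\delta_*.$ The key step is the minorization $\Gamma(f_m(E_m),K_{f_m},X^{\,\prime})>f_m(\Gamma_m).$ Indeed, any $\beta$ in the first family starts in $f_m(D)$ and ends in $K_{f_m}\subset X^{\,\prime}\setminus f_m(D);$ restricting $\beta$ to the maximal half-open initial interval on which it remains in $f_m(D)$ and composing with $f_m^{\,-1}$ yields a curve $\alpha$ in $D$ that begins at a point of $E_m.$ Since $f_m$ is a homeomorphism onto the open set $f_m(D)$ while $\beta$ eventually leaves $f_m(D),$ the curve $\alpha$ escapes every compact subset of $D$ along its open end, in particular it leaves $\overline{B(x_0,\varepsilon_0)};$ as $\alpha(0)\in E_m\subset B(x_0,\varepsilon_m),$ it must then contain a subcurve lying in $A_m$ that joins $S(x_0,\varepsilon_m)$ to $S(x_0,\varepsilon_0)$ (the connectedness argument of Remark~\ref{rem1}, cf.\ \cite[Theorem 1, $\S\,$46, item I]{Ku}), i.e.\ a subcurve in $\Gamma_m.$ Its $f_m$-image is a restriction of $\beta$ lying in $f_m(\Gamma_m),$ which proves the minorization. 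By \eqref{eq32*A} this gives $\delta_*\leqslant M_{\alpha^{\,\prime}}(f_m(\Gamma_m)),$ contradicting the upper bound; the contradiction proves the lemma.

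I expect this minorization to be the main obstacle: it is where one genuinely uses that $f_m$ is a homeomorphism onto an open set and that $K_{f_m}$ is disjoint from $f_m(D),$ so as to guarantee that the pulled-back curve $\alpha$ truly escapes the compact set $\overline{B(x_0,\varepsilon_0)}$ and hence crosses both spheres $S(x_0,\varepsilon_m)$ and $S(x_0,\varepsilon_0).$ A secondary point to watch is that \eqref{eq1C} is formally stated for a boundary point, so one should note that for $x_0\in D$ and $\varepsilon_0$ chosen as above $A_m\cap D=A_m,$ and that membership in $\frak{R}_{Q,\delta}(D)$ delivers exactly this interior ring inequality. The remaining ingredients — the vanishing of $M_{\alpha^{\,\prime}}(f_m(\Gamma_m))$ and the single invocation of uniformity — are routine, much as in Lemma~\ref{lem1}.
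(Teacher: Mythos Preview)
Your proof is correct and follows essentially the same strategy as the paper's: both argue by contradiction, join $x_0$ to $x_m$ by small continua inside shrinking balls, establish the minorization $\Gamma(f_m(E_m),K_{f_m},X^{\,\prime})>f_m(\Gamma_m)$ (the paper does this by the same connectedness/cut argument you invoke via Remark~\ref{rem1}), and then derive a contradiction between the upper bound $M_{\alpha^{\,\prime}}(f_m(\Gamma_m))\to0$ from \eqref{eq1C}--\eqref{eq21} and the uniform lower bound coming from the uniformity of $X^{\,\prime}$. Your treatment is in fact slightly more explicit than the paper's about why the pulled-back curve must leave the compact set $\overline{B(x_0,\varepsilon_0)}$ and about the interior-point use of the ring inequality, but no new idea is needed.
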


\medskip
\begin{proof}
The idea of a proof is closely related to \cite[Lemma~2]{Sev$_2$}.
Assume the contrary, i.e., assume that $\frak{R}_{Q, \delta}(D)$ is
not equicontinuous at $x_0.$ Now, there is exists $x_k\in D$ and
$f_k\in\frak{R}_{Q, \delta}(D)$ such that $x_k\rightarrow x_0$ as
$k\rightarrow\infty$ and
\begin{equation}\label{eq22}
d^{\,\prime}(f_k(x_k), f_k(x_0))\geqslant\varepsilon_0
\end{equation}
for some $\varepsilon_0.$ Since $X$ is locally connected by
assumption, there is a sequence of balls $B(x_0, \varepsilon_k),$
$k=0,1,2,\ldots,$ $\varepsilon_k\rightarrow 0$ as
$k\rightarrow\infty,$ such that $V_{k+1}\subset \overline{B(x_0,
\varepsilon_k)}\subset V_k,$ where the $V_k$ are continua in $D.$
There is no loss of generality in assuming that $x_k\in V_k.$ Now,
$x_0$ and $x_k$ can be joined by a curve $\gamma_k$ in the domain
$V_k.$ Note that an arbitrary curve $\gamma\in \Gamma(K_{f_k},
f_k(\gamma_k), X^{\,\prime})$ is not included entirely both in
$f_k(B(x_0, \varepsilon_0))$ and $X^{\,\prime}\setminus f_k(B(x_0,
\varepsilon_0)),$ therefore there exists $y_1\in |\gamma|\cap
f_k(S(x_0, \varepsilon_0))$ (see \cite[Theorem 1, $\S\,$46, item
I]{Ku}). Let $\gamma:[0, 1]\rightarrow X^{\,\prime}$ and let
$t_1\in (0, 1)$ be such that $\gamma(t_1)=y_1.$ There is no loss
of generality in assuming that $|\gamma|_{[0, t_1)}|\subset
f_k(B(x_0, \varepsilon_0)).$ We put $\gamma_1:=\gamma|_{[0,
t_1)},$ and $\alpha_1=f_k^{\,-1}(\gamma_1).$ Observe that
$|\alpha_1|\subset B(x_0, \varepsilon_0),$ moreover, $\alpha_1$ is
not included entirely either in $\overline{B(x_0,
\varepsilon_{k-1})},$ or in $X\setminus\overline{B(x_0,
\varepsilon_{k-1})}.$ Consequently, there exists $t_2\in (0, t_1)$
with $\alpha_1(t_2)\in S(x_0, \varepsilon_{k-1})$ (see
\cite[Theorem 1, $\S\,$46, item I]{Ku}). There is no loss of
generality in assuming that $|\alpha_1|_{[t_2,\,t_1]}|\subset
X\setminus\overline{B(x_0, \varepsilon_{k-1})}.$ Put
$\alpha_2=\alpha_1|_{[t_2,\,t_1]}.$ Observe that
$\gamma_2:=f_k(\alpha_2)$ is a subcurve of $\gamma.$ By the said
above,
$$\Gamma(K_{f_k}, f_k(\gamma_k),
X^{\,\prime})>\Gamma(f_k(S(x_0, \varepsilon_{k-1})), f_k(S(x_0,
\varepsilon_0)), f_k(A))\,,$$
where $A=A(x_0, \varepsilon_{k-1}, \varepsilon_0),$ and by
(\ref{eq32*A}) we obtain
\begin{equation}\label{eq3B}
M_{\alpha^{\,\prime}}(\Gamma(K_{f_k}, f_k(\gamma_k),
X^{\,\prime}))\leqslant M_{\alpha^{\,\prime}}(\Gamma(f_k(S(x_0,
\varepsilon_{k-1})), f_k(S(x_0, \varepsilon_0)), f_k(A)))\,.
\end{equation}
Since $I(\varepsilon, \varepsilon_0)\rightarrow\infty$ as
$\varepsilon\rightarrow 0,$ we can consider that $I(\varepsilon_k,
\varepsilon_0)>0$ for every $k=1,2,\ldots\,.$ Consider the family
of measurable functions
$$\eta_k(t)=\psi(t)/I(\varepsilon_k,
\varepsilon_0), \quad t\in(\varepsilon,\, \varepsilon_0)\,.$$
Observe that
$\int\limits_{\varepsilon_k}^{\varepsilon_0}\eta_k(t)\,dt=1.$ Now,
by (\ref{eq1C}), (\ref{eq21}) and (\ref{eq3B}), we obtain that
\begin{equation}\label{eq23}
M_{\alpha^{\,\prime}}(\Gamma(K_{f_k}, f_k(\gamma_k),
X^{\,\prime}))\leqslant \varphi(\varepsilon_k)\,,
\end{equation}
where $\varphi$ is some function with
$\varphi(\varepsilon_k)\rightarrow 0$ as $k\rightarrow\infty.$ From
other hand, it follows from (\ref{eq22}) that
$\min\{d^{\,\prime}(K_{f_k}), d^{\,\prime}(f_k(\gamma_k))\}\geqslant
r_0$ for some $r_0>0$ and every $k=1,2,\ldots,.$ Now, since
$X^{\,\prime}$ is uniformly domain, we obtain that
\begin{equation}\label{eq24}
M_{\alpha^{\,\prime}}(\Gamma(K_{f_k}, f_k(\gamma_k),
X^{\,\prime}))\geqslant \delta_0
\end{equation}
for some $\delta_0>0$ and every $k=1,2,\ldots, .$ Now, (\ref{eq24})
contradicts with (\ref{eq23}). Thus, $\frak{R}_{Q, \delta}(D)$ is
equicontinuous at $x_0,$ as required.~$\Box$
\end{proof}

\medskip
Given $\delta> 0,$ $D\subset X,$ a continuum $A\subset D$ and a
measurable function $Q:D\rightarrow[0, \infty],$ denote
$\frak{F}_{Q, \delta, A}(D)$ the family of all ring
$Q$-homeomorphisms $f:D\rightarrow X^{\,\prime}\setminus K_f$ in
$D,$ such that such that $f(D)$ is some open set in $X^{\,\prime}$
and $d^{\,\prime}(K_f)=\sup\limits_{x,y\in K_f}d^{\,\prime}(x,
y)\geqslant \delta$ and $d^{\,\prime}(f(A))\geqslant \delta,$ where
$K_f\subset X^{\,\prime}$ is a continuum. An analog of a following
result was proved in \cite[Theorem~3.1]{NP}.

\medskip
\begin{lemma}\label{lem3}
{\sl Let $D$ and $D_f^{\,\prime}:=f(D),$ $f\in\frak{F}_{Q, \delta,
A}(D),$ be domains with finite Hausdorff dimensions $\alpha$ and
$\alpha^{\,\prime}\geqslant 2$ in spaces $(X,d,\mu)$ and
$(X^{\,\prime},d^{\,\prime}, \mu^{\,\prime}),$ respectively, and let
$X^{\,\prime}$ be a domain with finite Hausdorff dimension
$\alpha^{\,\prime}\geqslant 2.$ Assume that $X$ is complete and
supports an $\alpha$-Poincare inequality, and that the measure $µ$
is doubling. Let $D$ be a bounded domain which is finitely connected
at the boundary, and let $Q:X\rightarrow (0, \infty)$ be a locally
integrable function. Assume that, for every $x_0\in  \overline{D},$
there exists a Lebesgue measurable function $\psi:(0,
\infty)\rightarrow (0, \infty)$ such that
%
%\begin{equation} \label{eq18}
%
%
%
$$I(\varepsilon,
\varepsilon_0):=\int\limits_{\varepsilon}^{\varepsilon_0}\psi(t)dt
< \infty$$
%
%
%\end{equation}
%
for every $\varepsilon\in (0,\varepsilon_0)$ and $I(\varepsilon,
\varepsilon_0)\rightarrow\infty$ as $\varepsilon\rightarrow 0,$ and

\begin{equation} \label{eq19}
\int\limits_{\varepsilon<d(x,
x_0)<\varepsilon_0}Q(x)\cdot\psi^{\,\alpha}(d(x, x_0)) \
d\mu(x)\,=\,o\left(I^{\,\alpha}(\varepsilon, \varepsilon_0)\right)
\end{equation}
as $\varepsilon\rightarrow 0.$  If $D_f^{\,\prime}:=f(D)$ and
$X^{\,\prime}$ are equi-uniform domains over $f\in \frak{F}_{Q,
\delta, A}(D)$ and $\overline{D_f^{\,\prime}}$ are compacts in
$X^{\,\prime},$ then every $f\in\frak{F}_{Q, \delta, A}(D)$ has a
continuous extension $f:\overline{D}_P\rightarrow
\overline{D_f^{\,\prime}},$ and $\frak{F}_{Q, \delta, A}(D)$ is
equicontinuous in $\overline{D}_P.$}
\end{lemma}

\medskip
\begin{proof}
Observe that $\partial D_f^{\,\prime}=\partial f(D)$ is strongly
accessible for every $f\in \frak{F}_{Q, \delta, A}(D).$ Indeed,
assume that $x_0\in \partial D^{\,\prime}.$ Given a neighborhood $U$
of $x_0,$ there exists $\varepsilon_1>0$ such that
$V:=\overline{B(x_0, \varepsilon_1)}\subset U.$ Assume that
$\partial U\ne\varnothing$ and $\partial V\ne\varnothing,$ now
$\varepsilon_2:=d^{\,\prime}(\partial U, \partial V)>0.$ Since
$D_f^{\,\prime}$ are equi-uniform, we obtain that
$d^{\,\prime}(F)\geqslant \varepsilon_2$ and
$d^{\,\prime}(G)\geqslant \varepsilon_2$ whenever $F$ and $G$ are
continua in $\partial D^{\,\prime}$ with $F\cap\partial
U\ne\varnothing\ne F\cap\partial V$ and $G\cap\partial
U\ne\varnothing\ne G\cap\partial V.$ Thus, $\partial
D_f^{\,\prime}=f(D)$ is strongly accessible, as required. Now, by
Lemma \ref{lem1} every $f\in \frak{F}_{Q, \delta, A}(D)$ has a
continuous extension $f:\overline{D}_P\rightarrow
\overline{D_f^{\,\prime}}.$

\medskip
Since $\mu$ is doubling, $X$ is complete if and only if it is proper
(i.e. every closed bounded set is compact), see
\cite[Proposition~3.1]{BB}. Now, $X$ is a locally compact space.
Since $X$ is complete, $X$ supports an $\alpha$-Poincare inequality,
and the measure $µ$ is doubling, we obtain that $X$ is locally
connected (see \cite{ABBS}, see also \cite[Theorem~17.1]{Ch}).
Moreover, $X$ is locally path connected by the
Mazurkiewicz–Moore–Menger theorem (see in \cite[Theorem~1, Ch.~6,
$\S$ 50, item II]{Ku}. Thus, all conditions of Lemma \ref{lem4} are
satisfied. Now, by Lemma \ref{lem4}, $\frak{F}_{Q, \delta, A}(D)$ is
equicontinuous at $x_0$ for every $x_0\in D.$

\medskip
It remains to show that $\frak{F}_{Q, \delta, A}(D)$ is
equicontinuous on $E_D=\overline{D}_P\setminus D.$ Assume the
contrary, i.e., assume that there exists $P_0\in E_D$ such that
$\frak{F}_{Q, \delta, A}(D)$ is not equicontinuous at $P_0.$ Now,
there is exists $P_k\in \overline{D}_P$ and $f_k\in\frak{F}_{Q,
\delta, A}(D)$ such that $P_k\rightarrow P_0$ as
$k\rightarrow\infty$ and
\begin{equation}\label{eq25}
d^{\,\prime}(f_k(P_k), f_k(P_0))\geqslant\varepsilon_0
\end{equation}
for some $\varepsilon_0.$ Since $f_k$ has a continuous extension on
$\overline{D}_P,$ given $k\in {\Bbb N},$ we can find $x_k\in D$ with
$m_P(x_k, P_k)<1/k$ and $d(f_k(x_k), f_k(P_k))<1/k.$ Thus, we obtain
from (\ref{eq25}) that
\begin{equation}\label{eq4C}
d^{\,\prime}(f_k(x_k), f_k(P_0))\geqslant
\varepsilon_0/2\quad\forall\quad k=1,2,\ldots,\,.
\end{equation}
Similarly, we can find $x_k^{\,\prime}\in D$ such that
$x_k^{\,\prime}\rightarrow P_0$ as $k\rightarrow \infty,$ and
$d^{\,\prime}(f_k(x_k^{\,\prime}), f_k(P_0))<1/k,$ $k=1,2,\ldots\,.$
Thus, we obtain from (\ref{eq4C}) that
\begin{equation}\label{eq5E}
d^{\,\prime}(f_k(x_k), f_k(x_k^{\,\prime}))\geqslant
\varepsilon_0/4\quad\forall\quad k=1,2,\ldots\,,
\end{equation}
where $x_k$ and $x_k^{\,\prime}\in D$ satisfy conditions
$x_k\rightarrow P_0,$ $x^{\,\prime}_k\rightarrow P_0$ as
$k\rightarrow\infty.$

\medskip
Denote $x_0:=I([E_k])$ (see Proposition \ref{pr3}). By Remark 4.5 in
\cite{ABBS} we can consider that the sets $E_k$ are open. Moreover,
by Remark 2.6 in \cite{ABBS} the set $E_k$ is path connected for
every $k\in {\Bbb N}.$ Arguing as in the proof of Lemma \ref{lem1},
we can show that, for every $r>0$ there exists $k\in {\Bbb N}$ such
that $E_k\subset B(x_0, r)\cap D.$ Thus, there is no loss of
generality in assuming that $x_k, x_k^{\,\prime}\in E_k$ and
$E_k\subset B(x_0, 2^{\,-k}).$ Let $\gamma_k$ be a path, joining
$x_k$ and $x_k^{\,\prime}$ in $E_k.$ Observe that $A\subset
D\setminus B(x_0, 2^{\,-k})$ for all $k>k_0$ and some $k_0\in {\Bbb
N}.$ We can consider that $2^{\,-k_0}<\varepsilon_0.$ Let $\Gamma_k$
be a family of curves joining $\gamma_k$ and $A$ in
$D^{\,\prime}_{f_k}.$ By Remark \ref{rem1}, we obtain that
\begin{equation}\label{eq26}
M_{\alpha^{\,\prime}}(f_k(\Gamma_k))\leqslant
M_{\alpha^{\,\prime}}(f_k(\Gamma(S(x_0, 2^{\,-k}), S(x_0,
2^{\,-k_0}), A(x_0, 2^{\,-k}, 2^{\,-k_0}))))\,.
\end{equation}
Observe that
$$\eta(t)=\left\{
\begin{array}{rr}
\psi(t)/I(2^{\,-k}, 2^{\,-k_0}), &   t\in (2^{\,-k},
2^{\,-k_0}),\\
0,  &  t\in {\Bbb R}\setminus (2^{\,-k}, 2^{\,-k_0})\,,
\end{array}
\right. $$
$I(\varepsilon,
\varepsilon_0):=\int\limits_{\varepsilon}^{\varepsilon_0}\psi(t)dt,$
satisfies the condition (\ref{eq*3!!}) at $r_1=2^{-k}$ and
$r_2=2^{-k_0}.$ By the definition of a ring $Q$-homeomorphism at a
boundary point, (\ref{eq19}) and (\ref{eq26}) imply
\begin{equation}\label{eq11A}
M_p(f_k(\Gamma_k))\leqslant \alpha(2^{\,-k})\rightarrow 0
\end{equation}
as $k\rightarrow \infty,$ where $\alpha(\varepsilon)$ is some
nonnegative function with $\alpha(\varepsilon)\rightarrow 0$ as
$\varepsilon\rightarrow 0.$

\medskip
However, $f_k(\Gamma_k)=\Gamma(f_k(\gamma_k), f_k(A),
D_{f_k}^{\,\prime}).$ By assumption,
$d^{\,\prime}(f_k(A))\geqslant \delta,$ $k=1,2,\ldots,$ moreover,
by (\ref{eq5E}) we obtain that
$d^{\,\prime}(f_k(\gamma_k))\geqslant \varepsilon_0/4,$
$k=1,2,\ldots, .$ Since $D_{f_k}^{\,\prime}$ are are equi-uniform
domains, we obtain that
\begin{equation}\label{eq27}
M_{\alpha^{\,\prime}}(f_k(\Gamma_k))\geqslant r_0\,,k=1,2,\ldots\,,
\end{equation}
for some $r_0>0.$ But (\ref{eq27}) contradicts (\ref{eq11A}). Thus,
$\frak{F}_{Q, \delta, A}(D)$ is equicontinuous at $P_0,$ as
required.~$\Box$
\end{proof}

\medskip
The following main result holds.

\medskip
\begin{theorem}\label{th5}
{\sl Let $D$ and $D_f^{\,\prime}:=f(D),$ $f\in\frak{F}_{Q, \delta,
A}(D),$ be domains with finite Hausdorff dimensions $\alpha$ and
$\alpha^{\,\prime}\geqslant 2$ in spaces $(X,d,\mu)$ and
$(X^{\,\prime},d^{\,\prime}, \mu^{\,\prime}),$ respectively, and let
$X^{\,\prime}$ be a domain with finite Hausdorff dimension
$\alpha^{\,\prime}\geqslant 2.$ Assume that $X$ is complete and
supports an $\alpha$-Poincare inequality, and that the measure $µ$
is doubling. Let $D$ be a bounded domain which is finitely connected
at the boundary, and let $Q:X\rightarrow (0, \infty)$ be a locally
integrable function. Assume that, $Q\in FMO(\overline{D}).$ If
$D_f^{\,\prime}:=f(D)$ and $X^{\,\prime}$ are equi-uniform domains
over $f\in \frak{F}_{Q, \delta, A}(D)$ and
$\overline{D_f^{\,\prime}}$ are compacts in $X^{\,\prime},$ then
$\frak{F}_{Q, \delta, A}(D)$ is equicontinuous in $\overline{D}_P.$}
\end{theorem}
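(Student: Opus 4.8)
The plan is to derive Theorem~\ref{th5} as a corollary of Lemma~\ref{lem3} together with Proposition~\ref{pr3A}, in exactly the same way that Theorem~\ref{th4} was derived from Lemma~\ref{lem1} and Proposition~\ref{pr3A}. The point is that Lemma~\ref{lem3} already delivers both conclusions we want -- the continuous boundary extension of each $f\in\frak{F}_{Q,\delta,A}(D)$ and the equicontinuity of the whole family on $\overline{D}_P$ -- provided one can produce, for every $x_0\in\overline{D}$, a Lebesgue measurable weight $\psi:(0,\infty)\to(0,\infty)$ satisfying the divergence condition on $I(\varepsilon,\varepsilon_0)$ and the smallness condition \eqref{eq19}. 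So the entire proof of Theorem~\ref{th5} amounts to verifying that the hypothesis $Q\in FMO(\overline{D})$ supplies such a $\psi$ at every point of $\overline{D}$.

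First I would record that the ambient hypotheses of Theorem~\ref{th5} put us in the setting of Proposition~\ref{pr3A}: since $X$ is complete, supports an $\alpha$-Poincare inequality, and the measure $\mu$ is doubling, the estimate \eqref{eq2} holds, hence $X$ is upper $\alpha$-regular at every point, and the doubling property of $\mu$ gives the mild growth condition \eqref{eq7} at every $x_0\in\overline{D}$ (the logarithmic factor there only weakens the plain doubling inequality). Thus for each $x_0\in\overline{D}$ with $Q\in FMO(x_0)$ -- which holds by the assumption $Q\in FMO(\overline{D})$ -- Proposition~\ref{pr3A} provides the function $\psi(t)=\dfrac{1}{t\log\frac1t}$ for which $I(\varepsilon,\varepsilon_0)=\int_\varepsilon^{\varepsilon_0}\psi(t)\,dt\asymp\log\log\frac1\varepsilon\to\infty$ as $\varepsilon\to0$, $I(\varepsilon,\varepsilon_0)<\infty$ for $\varepsilon\in(0,\varepsilon_0)$, and the integral estimate \eqref{eq4*} holds, i.e. exactly \eqref{eq19} of Lemma~\ref{lem3}. (One must pick $\varepsilon_0$ small enough that $\varepsilon_0<e^{-1}$, so that $\log\frac1t>0$ and $\psi>0$ on $(0,\varepsilon_0)$; for larger scales the behaviour is irrelevant since only $\varepsilon\to0$ matters.)

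With this, I would simply invoke Lemma~\ref{lem3}: all of its structural hypotheses ($X$ complete with $\alpha$-Poincare inequality, $\mu$ doubling, $D$ bounded and finitely connected at the boundary, $Q:X\to(0,\infty)$ locally integrable, $X^{\,\prime}$ a domain of Hausdorff dimension $\alpha^{\,\prime}\geqslant2$, $D_f^{\,\prime}$ and $X^{\,\prime}$ equi-uniform over $f\in\frak{F}_{Q,\delta,A}(D)$, and $\overline{D_f^{\,\prime}}$ compact) are part of the hypotheses of Theorem~\ref{th5}, and the remaining analytic hypothesis on the existence of $\psi$ at every $x_0\in\overline{D}$ has just been checked. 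Therefore Lemma~\ref{lem3} yields that every $f\in\frak{F}_{Q,\delta,A}(D)$ extends continuously to $f:\overline{D}_P\to\overline{D_f^{\,\prime}}$ and that $\frak{F}_{Q,\delta,A}(D)$ is equicontinuous on $\overline{D}_P$, which is precisely the assertion of Theorem~\ref{th5}. $\Box$

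There is essentially no obstacle here beyond bookkeeping: the real work was already done inside Lemma~\ref{lem3} (the modulus estimates via maximal liftings, Remark~\ref{rem1}, the equi-uniformity argument) and inside Proposition~\ref{pr3A} (the $FMO$-to-\eqref{eq4*} implication). The only point requiring a sentence of care is matching the hypothesis $Q\in FMO(\overline{D})$ to the pointwise requirement ``$Q\in FMO(x_0)$ for every $x_0\in\overline{D}$'' used by Proposition~\ref{pr3A}, and checking that doubling indeed implies \eqref{eq7} -- both of which are immediate. Hence the proof is short, reading: ``Theorem~\ref{th5} follows from Lemma~\ref{lem3} and Proposition~\ref{pr3A} in the same way as Theorem~\ref{th4} follows from Lemma~\ref{lem1} and Proposition~\ref{pr3A}.''
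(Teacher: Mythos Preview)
Your proposal is correct and follows exactly the paper's own approach: the paper's proof of Theorem~\ref{th5} consists of the single sentence that the result follows from Lemma~\ref{lem3} and Proposition~\ref{pr3A}, noting that $X$ is upper $\alpha$-regular by \eqref{eq2} and that \eqref{eq7} holds because $\mu$ is doubling. Your write-up is simply a more detailed elaboration of this same reduction.
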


\medskip
{\it Proof of the Theorem~\ref{th5}} follows from Lemma~\ref{lem3}
and Proposition~\ref{pr3A}. Indeed, $X$ is upper regular by
(\ref{eq2}), and (\ref{eq7}) holds because the measure $µ$ is
doubling by assumptions. So, the desired statement follows from
the Lemma~\ref{lem3}.~$\Box$

\section{Equicontinuity of families of maps with \textbf{A}-condition}

\medskip
Given $\delta> 0,$ $D\subset X$ and a measurable function
$Q:D\rightarrow[0, \infty],$ denote $\frak{G}_{Q, \delta,
\textbf{A}}(D)$ the family of all open discrete ring $Q$-maps
$f:D\rightarrow X^{\,\prime}\setminus K_f$ in $D$ with
\textbf{A}-condition, such that
$d^{\,\prime}(K_f)=\sup\limits_{x,y\in K_f}d^{\,\prime}(x,
y)\geqslant \delta,$ where $K_f\subset X^{\,\prime}$ is a continuum.
The following statement holds.

\medskip
\begin{lemma}\label{lem5}
{\sl Let $(X,d,\mu)$ and $\left(X^{\,\prime},d^{\,\prime},
\mu^{\,\prime}\right)$ be metric spaces, let $D$ be a domain in $X$
with finite Hausdorff dimension $\alpha\geqslant 2,$ and let
$X^{\,\prime}$ be a domain with finite Hausdorff dimension
$\alpha^{\,\prime}\geqslant 2.$ Given $x_0\in D,$ assume that, there
exists a Lebesgue measurable function $\psi:(0, \infty)\rightarrow
(0, \infty)$ such that
%
%\begin{equation} \label{eq28}
%
%
%
$$I(\varepsilon,
\varepsilon_0):=\int\limits_{\varepsilon}^{\varepsilon_0}\psi(t)dt
< \infty$$
%
%
%\end{equation}
%
for every $\varepsilon\in (0,\varepsilon_0)$ and $I(\varepsilon,
\varepsilon_0)\rightarrow\infty$ as $\varepsilon\rightarrow 0,$ and

\begin{equation} \label{eq29}
\int\limits_{\varepsilon<d(x,
x_0)<\varepsilon_0}Q(x)\cdot\psi^{\,\alpha}(d(x, x_0)) \
d\mu(x)\,=\,o\left(I^{\,\alpha}(\varepsilon, \varepsilon_0)\right)
\end{equation}
as $\varepsilon\rightarrow 0.$ If $X$ is locally path connected and
locally compact space, and $X^{\,\prime}$ is a uniform domain, then
$\frak{R}_{Q, \delta}(D)$ is equicontinuous at $x_0.$}
\end{lemma}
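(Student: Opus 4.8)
The conclusion stated here is, word for word, that of Lemma~\ref{lem4}: the family $\frak{R}_{Q, \delta}(D)$ of ring $Q$-homeomorphisms is equicontinuous at $x_0$ under the same hypotheses. Accordingly, the plan is to rerun the contradiction-and-modulus scheme of Lemma~\ref{lem4}. Suppose $\frak{R}_{Q, \delta}(D)$ is not equicontinuous at $x_0$. Then there exist $x_k\to x_0$ and $f_k\in\frak{R}_{Q, \delta}(D)$ with $d^{\,\prime}(f_k(x_k), f_k(x_0))\geqslant\varepsilon_0$ for some fixed $\varepsilon_0>0$. Using that $X$ is locally path connected and locally compact, I would choose radii $\varepsilon_k\to 0$ and continua $V_k\subset D$ with $V_{k+1}\subset\overline{B(x_0,\varepsilon_k)}\subset V_k$ and $x_k\in V_k$, and then join $x_0$ to $x_k$ by a path $\gamma_k$ lying in $V_k$.

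The next step is an upper bound for $M_{\alpha^{\,\prime}}(\Gamma(K_{f_k}, f_k(\gamma_k), X^{\,\prime}))$. Here the homeomorphy of each $f_k$ is precisely the feature that distinguishes $\frak{R}_{Q, \delta}(D)$ from the discrete open case: any test curve $\gamma\in\Gamma(K_{f_k}, f_k(\gamma_k), X^{\,\prime})$ is pulled back directly by the inverse homeomorphism, $\alpha:=f_k^{\,-1}(\gamma)$, so no maximal-lifting or ${\bf A}$-condition machinery is needed. A topological crossing argument (see \cite[Theorem~1, $\S\,$46, item~I]{Ku}) shows that $\alpha$ must meet both $S(x_0,\varepsilon_{k-1})$ and $S(x_0,\varepsilon_0)$, whence the minorization
$$\Gamma(K_{f_k}, f_k(\gamma_k), X^{\,\prime})>\Gamma(f_k(S(x_0,\varepsilon_{k-1})), f_k(S(x_0,\varepsilon_0)), f_k(A))\,,$$
with $A=A(x_0,\varepsilon_{k-1},\varepsilon_0)$. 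Since $f_k$ is a homeomorphism the right-hand family equals $f_k(\Gamma(S(x_0,\varepsilon_{k-1}), S(x_0,\varepsilon_0), A))$, so combining (\ref{eq32*A}) with the ring $Q$-inequality (\ref{eq1C}), applied with the normalized weight $\eta_k(t)=\psi(t)/I(\varepsilon_k,\varepsilon_0)$ and hypothesis (\ref{eq29}), I obtain $M_{\alpha^{\,\prime}}(\Gamma(K_{f_k}, f_k(\gamma_k), X^{\,\prime}))\leqslant\varphi(\varepsilon_k)\to 0$ as $k\to\infty$.

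Finally I would produce the opposing lower bound. Since $\gamma_k$ joins $x_0$ to $x_k$, its image $f_k(\gamma_k)$ joins $f_k(x_0)$ to $f_k(x_k)$, so $d^{\,\prime}(f_k(\gamma_k))\geqslant\varepsilon_0$ by the failure of equicontinuity, while $d^{\,\prime}(K_{f_k})\geqslant\delta$ by the definition of $\frak{R}_{Q, \delta}(D)$. Thus both continua $K_{f_k}$ and $f_k(\gamma_k)$ have diameter at least $r_0:=\min\{\delta,\varepsilon_0\}>0$, and since $X^{\,\prime}$ is a uniform domain there is $\delta_0>0$ with $M_{\alpha^{\,\prime}}(\Gamma(K_{f_k}, f_k(\gamma_k), X^{\,\prime}))\geqslant\delta_0$ for all $k$. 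For $k$ large enough that $\varphi(\varepsilon_k)<\delta_0$ this contradicts the upper bound, proving equicontinuity at $x_0$. The only genuinely delicate point is the minorization step, i.e.\ verifying that every pulled-back curve crosses both spheres so that (\ref{eq1C}) applies with the prescribed $\eta_k$, exactly as in Lemma~\ref{lem4}; the uniform-domain lower bound is then immediate.
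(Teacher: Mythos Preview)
Your reading of the statement is literal and your argument is correct for it: as printed, the conclusion is about $\frak{R}_{Q,\delta}(D)$ and coincides verbatim with Lemma~\ref{lem4}, so rerunning that proof works. However, the printed ``$\frak{R}_{Q,\delta}(D)$'' is a typo. The lemma sits in Section~6, immediately after the definition of $\frak{G}_{Q,\delta,\textbf{A}}(D)$ (open discrete ring $Q$-maps with the \textbf{A}-condition), and the paper's own proof begins ``assume that $\frak{G}_{Q,\delta,\textbf{A}}(D)$ is not equicontinuous at $x_0$''. The intended assertion is equicontinuity of $\frak{G}_{Q,\delta,\textbf{A}}(D)$, for which your inverse-homeomorphism pull-back $\alpha=f_k^{-1}(\gamma)$ is unavailable.

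The paper therefore takes a different route at the upper-bound step. Instead of minorizing through the image of the two spheres, it invokes \cite[Lemma~3]{Sev$_2$} to obtain directly
\[
M_{\alpha'}\bigl(\Gamma(f_k(\overline{B(x_0,\varepsilon)}),\,\partial f_k(B(x_0,\varepsilon_0)),\,X')\bigr)\leqslant\alpha(\varepsilon)\to 0,
\]
and then uses the minorization $\Gamma(K_{f_k},f_k(\gamma_k),X')>\Gamma(f_k(\gamma_k),\partial f_k(B(x_0,\varepsilon_0)),X')$ (since $K_{f_k}$ lies outside $f_k(D)\supset f_k(B(x_0,\varepsilon_0))$, any curve from $f_k(\gamma_k)$ to $K_{f_k}$ must cross $\partial f_k(B(x_0,\varepsilon_0))$). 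This argument only needs openness of $f_k$ and the ring $Q$-inequality packaged into \cite[Lemma~3]{Sev$_2$}; it does not use injectivity or the \textbf{A}-condition at this point. The lower bound from uniformity of $X'$ is the same as yours. So your proof is fine for the literal (mis)statement but does not cover the intended class; to match the paper you should replace the inverse-pull-back step by the $\partial f_k(B(x_0,\varepsilon_0))$ minorization together with the cited auxiliary lemma.
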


\medskip
\begin{proof}
The idea of a proof is closely related to \cite[Lemma~2]{Sev$_2$},
and similar to Lemma \ref{lem4}. Assume the contrary, i.e., assume
that $\frak{G}_{Q, \delta, \textbf{A}}(D)$ is not equicontinuous
at $x_0.$ Now, there exists $x_k\in D$ and $f_k\in\frak{G}_{Q,
\delta, \textbf{A}}(D)$ such that $x_k\rightarrow x_0$ as
$k\rightarrow\infty$ and
\begin{equation}\label{eq30}
d^{\,\prime}(f_k(x_k), f_k(x_0))\geqslant\varepsilon_0
\end{equation}
for some $\varepsilon_0.$ Since $X$ is locally compact metric space,
we can consider that $\overline{B(x_0, \varepsilon_0)}$ is a compact
set in $X.$ Since $X$ is locally compact metric space, we can
consider that $\overline{B(x_0, \varepsilon_0)}$ is a compact set in
$X.$ Since $X$ is locally connected by assumption, there is a
sequence of balls $B(x_0, \varepsilon_k),$ $k=0,1,2,\ldots,$
$\varepsilon_k\rightarrow 0$ as $k\rightarrow\infty,$ such that
$V_{k+1}\subset \overline{B(x_0, \varepsilon_k)}\subset V_k,$ where
the $V_k$ are continua in $D.$ There is no loss of generality in
assuming that $x_k\in V_k.$ Now, $x_0$ and $x_k$ can be joined by a
curve $\gamma_k$ in the domain $V_k.$

\medskip
By \cite[Lemma~3]{Sev$_2$}, (\ref{eq29}) implies that
\begin{equation}\label{eq31}
M_{\alpha^{\,\prime}}(\Gamma(f_k(\overline{B(x_0, \varepsilon)}),
\partial f_k(B(x_0, \varepsilon_0)), X^{\,\prime}))\leqslant
\alpha(\varepsilon)
\end{equation}
as $\varepsilon\rightarrow 0,$ where $\alpha(\varepsilon)$ is some
function with $\alpha(\varepsilon)\rightarrow 0$ as
$\varepsilon\rightarrow 0.$ Thus, we obtain from (\ref{eq31}) that
\begin{equation}\label{eq32}
M_{\alpha^{\,\prime}}(\Gamma(f_k(\gamma_k),
\partial f_k(B(x_0, \varepsilon_0)), X^{\,\prime}))\leqslant
\alpha(\varepsilon_{k-1})\rightarrow 0,\quad k\rightarrow\infty\,.
\end{equation}
From other hand, observe that $\Gamma(K_{f_k}, f_k(\gamma_k),
X^{\,\prime})>\Gamma(f_k(\gamma_k),
\partial f_k(B(x_0, \varepsilon_0)), X^{\,\prime})$ (see \cite[Ch.~5, $\S\,$46,
item I]{Ku}); consequently, by (\ref{eq32*A}) we obtain
\begin{equation}\label{eq9B}
M_{\alpha^{\,\prime}}(\Gamma(K_{f_k}, f_k(\gamma_k),
X^{\,\prime}))\leqslant M_{\alpha^{\,\prime}}(\Gamma(f_k(\gamma_k),
\partial f_k(B(x_0, \varepsilon_0)), X^{\,\prime}))\,.
\end{equation}
By (\ref{eq30}), we obtain that
$d^{\,\prime}(f_k(\gamma_k))\geqslant\varepsilon_0$ for every
$k=1,2,\ldots,,$ moreover, $d^{\,\prime}(K_{f_k})\geqslant\delta$
for every $k=1,2,\ldots,$ by assumption of the lemma. Now, since
$X^{\,\prime}$ is a uniform domain, we obtain that
\begin{equation}\label{eq33}
M_{\alpha^{\,\prime}}(\Gamma(K_{f_k}, f_k(\gamma_k),
X^{\,\prime}))\geqslant r_0
\end{equation}
for each $k=1,2,\ldots,$ and some $r_0>0.$ Observe that
(\ref{eq33}) contradicts with (\ref{eq32}) and (\ref{eq9B}). Thus,
$\frak{G}_{Q, \delta, \textbf{A}}(D)$ is equicontinuous at $x_0,$
as required.~$\Box$
\end{proof}

\medskip
Let $\delta> 0,$ let $D\subset X$ and let $Q:D\rightarrow[0,
\infty]$ be a measurable function. Denote $\frak{E}_{Q, \delta,
\textbf{A}}(D)$ the family of all open closed discrete ring $Q$-maps
$f:D\rightarrow X^{\,\prime}$ with the following conditions: 1) $f$
satisfies \textbf{A}-condition in $D;$ 2) given $f:D\rightarrow
X^{\,\prime}$ there exists a continuum $K_f\subset
X^{\,\prime}\setminus f(D)$ and
$d^{\,\prime}(K_f)=\sup\limits_{x,y\in K_f}d^{\,\prime}(x,
y)\geqslant \delta;$ 3) given $f:D\rightarrow X^{\,\prime}$ there
exists a continuum $A_f\subset f(D)$ such that
$d^{\,\prime}(A_f)\geqslant\delta$ and $d(f^{\,-1}(A_f), \partial
D)\geqslant\delta.$ The following statement holds.

\medskip
\begin{lemma}\label{lem6}
{\sl Let $D$ and $D_f^{\,\prime}:=f(D),$ $\frak{E}_{Q, \delta,
\textbf{A}}(D),$ be domains with finite Hausdorff dimensions
$\alpha$ and $\alpha^{\,\prime}\geqslant 2$ in spaces $(X,d,\mu)$
and $(X^{\,\prime},d^{\,\prime}, \mu^{\,\prime}),$ respectively, and
let $X^{\,\prime}$ be a domain with finite Hausdorff dimension
$\alpha^{\,\prime}\geqslant 2.$ Assume that $X$ is complete and
supports an $\alpha$-Poincare inequality, and that the measure $µ$
is doubling. Let $D$ be a bounded domain which is finitely connected
at the boundary, and let $Q:X\rightarrow (0, \infty)$ be a locally
integrable function. Assume that, for every $x_0\in  \overline{D},$
there exists a Lebesgue measurable function $\psi:(0,
\infty)\rightarrow (0, \infty)$ such that
%
%\begin{equation} \label{eq34}
%
%
%
$$I(\varepsilon,
\varepsilon_0):=\int\limits_{\varepsilon}^{\varepsilon_0}\psi(t)dt
< \infty$$
%
%
%\end{equation}
%
for every $\varepsilon\in (0,\varepsilon_0)$ and $I(\varepsilon,
\varepsilon_0)\rightarrow\infty$ as $\varepsilon\rightarrow 0,$ and

\begin{equation} \label{eq35}
\int\limits_{\varepsilon<d(x,
x_0)<\varepsilon_0}Q(x)\cdot\psi^{\,\alpha}(d(x, x_0)) \
d\mu(x)\,=\,o\left(I^{\,\alpha}(\varepsilon, \varepsilon_0)\right)
\end{equation}
as $\varepsilon\rightarrow 0.$  If $D_f^{\,\prime}:=f(D)$ and
$X^{\,\prime}$ are equi-uniform domains over $\frak{E}_{Q, \delta,
\textbf{A}}(D)$ and $\overline{D_f^{\,\prime}}$ are compacts in
$X^{\,\prime},$ then every $f\in\frak{E}_{Q, \delta, \textbf{A}}(D)$
has a continuous extension $f:\overline{D}_P\rightarrow
\overline{D_f^{\,\prime}},$ and $\frak{E}_{Q, \delta,
\textbf{A}}(D)$ is equicontinuous in $\overline{D}_P.$}
\end{lemma}

\medskip
\begin{proof}
Arguing as in the proof of Lemma \ref{lem3}, we obtain that
$\partial D_f^{\,\prime}=\partial f(D)$ is strongly accessible for
every $f\in\frak{E}_{Q, \delta, \textbf{A}}(D).$ Moreover, we see
that $X$ is a locally compact and locally path connected space. Now,
by Lemma \ref{lem1} every $f\in \frak{F}_{Q, \delta, A}(D)$ has a
continuous extension $f:\overline{D}_P\rightarrow
\overline{D_f^{\,\prime}}.$ By Lemma \ref{lem5}, we also obtain that
$\frak{E}_{Q, \delta, \textbf{A}}(D)$ is equicontinuous in $D,$
because $\frak{E}_{Q, \delta, \textbf{A}}(D)\subset \frak{G}_{Q,
\delta, \textbf{A}}(D).$

\medskip
It remains to show that $\frak{E}_{Q, \delta, \textbf{A}}(D)$ is
equicontinuous on $E_D:=\overline{D}_P\setminus D.$ Assume the
contrary, i.e., assume that there exists $P_0\in E_D$ such that
$\frak{E}_{Q, \delta, \textbf{A}}(D)$ is not equicontinuous at
$P_0.$ Now, there is exists $P_k\in \overline{D}_P$ and
$f_k\in\frak{E}_{Q, \delta, \textbf{A}}(D)$ such that
$P_k\rightarrow P_0$ as $k\rightarrow\infty$ and
\begin{equation}\label{eq36}
d^{\,\prime}(f_k(P_k), f_k(P_0))\geqslant\varepsilon_0
\end{equation}
for some $\varepsilon_0.$ Since $f_k$ has a continuous extension on
$\overline{D}_P,$ given $k\in {\Bbb N},$ we can find $x_k\in D$ with
$m_P(x_k, P_k)<1/k$ and $d(f_k(x_k), f_k(P_k))<1/k.$ Thus, we obtain
from (\ref{eq36}) that
\begin{equation}\label{eq37}
d^{\,\prime}(f_k(x_k), f_k(P_0))\geqslant
\varepsilon_0/2\quad\forall\quad k=1,2,\ldots,\,.
\end{equation}
Similarly, we can find $x_k^{\,\prime}\in D$ such that
$x_k^{\,\prime}\rightarrow P_0$ as $k\rightarrow \infty,$ and
$d^{\,\prime}(f_k(x_k^{\,\prime}), f_k(P_0))<1/k,$ $k=1,2,\ldots\,.$
Thus, we obtain from (\ref{eq37}) that
%
%\begin{equation}\label{eq5F}
$$d^{\,\prime}(f_k(x_k), f_k(x_k^{\,\prime}))\geqslant
\varepsilon_0/4\quad\forall\quad k=1,2,\ldots\,,$$
%\end{equation}
%
where $x_k$ and $x_k^{\,\prime}\in D$ satisfy conditions
$x_k\rightarrow P_0,$ $x^{\,\prime}_k\rightarrow P_0$ as
$k\rightarrow\infty.$

\medskip
Denote $x_0:=I([E_k])$ (see Proposition \ref{pr3}). By Remark 4.5 in
\cite{ABBS} we can consider that the sets $E_k$ are open. Moreover,
by Remark 2.6 in \cite{ABBS} the set $E_k$ is path connected for
every $k\in {\Bbb N}.$ Arguing as in the proof of Lemma \ref{lem1},
we can show that, for every $r>0$ there exists $k\in {\Bbb N}$ such
that $E_k\subset B(x_0, r)\cap D.$ Thus, there is no loss of
generality in assuming that $x_k, x_k^{\,\prime}\in E_k$ and
$E_k\subset B(x_0, 2^{\,-k}).$ Let $\gamma_k$ be a path, joining
$x_k$ and $x_k^{\,\prime}$ in $E_k.$ Let $A_{f_k}$ be the set from
the definition of $\frak{E}_{Q, \delta, \textbf{A}}(D).$ Observe
that $f_k^{\,-1}(A_{f_k})\subset D\setminus B(x_0, 2^{\,-k})$ for
all $k>k_0$ and some $k_0\in {\Bbb N}.$ We can consider that
$2^{\,-k_0}<\varepsilon_0.$ Let $\Gamma_k$ be a family of curves
joining $\gamma_k$ and $f_k^{\,-1}(A_{f_k})$ in $D.$ By Remark
\ref{rem1}, we obtain that
\begin{equation}\label{eq38}
M_{\alpha^{\,\prime}}(f_k(\Gamma_k))\leqslant
M_{\alpha^{\,\prime}}(f_k(\Gamma(S(x_0, 2^{\,-k}), S(x_0,
2^{\,-k_0}), A(x_0, 2^{\,-k}, 2^{\,-k_0}))))\,.
\end{equation}
Observe that
$$\eta(t)=\left\{
\begin{array}{rr}
\psi(t)/I(2^{\,-k}, 2^{\,-k_0}), &   t\in (2^{\,-k},
2^{\,-k_0}),\\
0,  &  t\in {\Bbb R}\setminus (2^{\,-k}, 2^{\,-k_0})\,,
\end{array}
\right. $$
$I(\varepsilon,
\varepsilon_0):=\int\limits_{\varepsilon}^{\varepsilon_0}\psi(t)dt,$
satisfies the condition (\ref{eq*3!!}) at $r_1=2^{-k}$ and
$r_2=2^{-k_0}.$ By the definition of a ring $Q$-homeomorphism at a
boundary point, (\ref{eq35}) and (\ref{eq38}) imply
\begin{equation}\label{eq41}
M_{\alpha^{\,\prime}}(f_k(\Gamma_k))\leqslant
\alpha(2^{\,-k})\rightarrow 0
\end{equation}
as $k\rightarrow \infty,$ where $\alpha(\varepsilon)$ is some
nonnegative function with $\alpha(\varepsilon)\rightarrow 0$ as
$\varepsilon\rightarrow 0.$

\medskip
From other hand, let us consider the family $\Gamma(f_k(\gamma_k),
A_{f_k}, D^{\,\prime}_{f_k}).$ Since $D_{f_k}^{\,\prime}:=f_k(D)$
are equi-uniform domains, we obtain that
\begin{equation}\label{eq39}
M_{\alpha^{\,\prime}}(\Gamma(f_k(\gamma_k), A_{f_k},
D^{\,\prime}_{f_k}))\geqslant r_0\,,k=1,2,\ldots,
\end{equation}
for some $r_0>0.$
Let $\Gamma_k^*$ be the family of all maximal $f_k$-liftings of
$\Gamma(f_k(\gamma_k), A_{f_k}, D^{\,\prime}_{f_k})$ starting at
$\gamma_k.$ (The family $\Gamma_k^*$ is well defined in view of
condition \textbf{A}). Arguing as in the proof of Lemma~\ref{lem1},
we can show that $\Gamma_k^*\subset \Gamma_k.$ Besides that,
$f_k(\Gamma_k^{\,*})<\Gamma(f_k(\gamma_k), A_{f_k},
D^{\,\prime}_{f_k}).$ Thus, we obtain
\begin{equation}\label{eq40}
M_{\alpha^{\,\prime}}(\Gamma(f_k(\gamma_k), A_{f_k},
D^{\,\prime}_{f_k}))\leqslant
M_{\alpha^{\,\prime}}(f_k(\Gamma_k^{\,*}))\leqslant
M_{\alpha^{\,\prime}}(f_k(\Gamma_k))\,.
\end{equation}
But (\ref{eq39}) and (\ref{eq40}) contradict with (\ref{eq41}).
Thus, $\frak{E}_{Q, \delta, \textbf{A}}(D)$ is equicontinuous at
$P_0,$ as required.~$\Box$
\end{proof}

\medskip
The following main result holds.

\medskip
\begin{theorem}\label{th2}
{\sl Let $D$ and $D_f^{\,\prime}:=f(D),$ $\frak{E}_{Q, \delta,
\textbf{A}}(D),$ be domains with finite Hausdorff dimensions
$\alpha$ and $\alpha^{\,\prime}\geqslant 2$ in spaces $(X,d,\mu)$
and $(X^{\,\prime},d^{\,\prime}, \mu^{\,\prime}),$ respectively, and
let $X^{\,\prime}$ be a domain with finite Hausdorff dimension
$\alpha^{\,\prime}\geqslant 2.$ Assume that $X$ is complete and
supports an $\alpha$-Poincare inequality, and that the measure $µ$
is doubling. Let $D$ be a bounded domain which is finitely connected
at the boundary, and let $Q:X\rightarrow (0, \infty)$ be a locally
integrable function. Assume that $Q\in FMO(\overline{D}).$ If
$D_f^{\,\prime}:=f(D)$ and $X^{\,\prime}$ are equi-uniform domains
over $\frak{E}_{Q, \delta, \textbf{A}}(D)$ and
$\overline{D_f^{\,\prime}}$ are compacts in $X^{\,\prime},$ then
every $f\in\frak{E}_{Q, \delta, \textbf{A}}(D)$ has a continuous
extension $f:\overline{D}_P\rightarrow \overline{D_f^{\,\prime}},$
and $\frak{E}_{Q, \delta, \textbf{A}}(D)$ is equicontinuous in
$\overline{D}_P.$}
\end{theorem}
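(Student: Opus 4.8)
The plan is to derive Theorem~\ref{th2} from Lemma~\ref{lem6} in exactly the same way that Theorem~\ref{th5} is obtained from Lemma~\ref{lem3}: the only gap between the hypotheses of the theorem and those of the lemma is that the lemma asks, at every point $x_0\in\overline{D}$, for a Lebesgue measurable weight $\psi:(0,\infty)\to(0,\infty)$ whose divergent integral $I(\varepsilon,\varepsilon_0)$ controls the weighted integral of $Q$ as in (\ref{eq35}), whereas the theorem merely assumes $Q\in FMO(\overline{D})$. So the first and main step is to verify this implication, and here Proposition~\ref{pr3A} is the right tool.

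First I would check that $X$ is upper $\alpha$-regular: the doubling property of $\mu$ together with the growth estimate (\ref{eq2}) yields a bound $\mu(B(x_0,r))\leqslant C r^{\alpha}$ of the required form at every point, so the upper regularity hypothesis of Proposition~\ref{pr3A} holds. Next, since $\mu$ is doubling, the measure-comparison inequality (\ref{eq7}), namely $\mu(D\cap B(x_0,2r))\leqslant \gamma\log^{\alpha-2}(1/r)\,\mu(D\cap B(x_0,r))$ for small $r$, is automatic, because the doubling constant is a fixed number whereas the right-hand side grows like a power of $\log(1/r)$ as soon as $\alpha\geqslant 2$. Hence Proposition~\ref{pr3A} applies at each $x_0\in\overline{D}$ with $\psi(t)=\dfrac{1}{t\log(1/t)}$: the condition $Q\in FMO(x_0)$ gives a function $F(\varepsilon,\varepsilon_0)$ with $F(\varepsilon,\varepsilon_0)/I^{\alpha}(\varepsilon,\varepsilon_0)\to 0$ as $\varepsilon\to 0$, which is precisely the $o\bigl(I^{\alpha}(\varepsilon,\varepsilon_0)\bigr)$ estimate (\ref{eq35}); and with this $\psi$ one has $I(\varepsilon,\varepsilon_0)=\int_{\varepsilon}^{\varepsilon_0}\frac{dt}{t\log(1/t)}<\infty$ for each $\varepsilon\in(0,\varepsilon_0)$ and $I(\varepsilon,\varepsilon_0)\to\infty$ as $\varepsilon\to 0$, so all integral hypotheses of Lemma~\ref{lem6} are met at every boundary (and interior) point.

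With that verification in hand, the remaining steps are immediate: all other hypotheses of Lemma~\ref{lem6} --- completeness of $X$, the $\alpha$-Poincar\'e inequality, doubling of $\mu$, boundedness and finite connectedness at the boundary of $D$, local integrability of $Q:X\to(0,\infty)$, and the equi-uniformity of the domains $D_f^{\,\prime}=f(D)$ together with compactness of $\overline{D_f^{\,\prime}}$ in $X^{\,\prime}$ over $f\in\frak{E}_{Q,\delta,\textbf{A}}(D)$ --- are carried over verbatim from the statement of Theorem~\ref{th2}. Applying Lemma~\ref{lem6} then yields at once that every $f\in\frak{E}_{Q,\delta,\textbf{A}}(D)$ extends continuously to a map $f:\overline{D}_P\to\overline{D_f^{\,\prime}}$ and that $\frak{E}_{Q,\delta,\textbf{A}}(D)$ is equicontinuous on $\overline{D}_P$, which is the assertion of the theorem. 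The only point requiring any care --- and hence the main (if modest) obstacle --- is the first step: making sure the $FMO$ hypothesis on $\overline{D}$ really does feed Proposition~\ref{pr3A} uniformly, i.e.\ that the weight $\psi$ and the control in (\ref{eq35}) can be taken of the stated universal form $\psi(t)=1/(t\log(1/t))$ independently of the point $x_0\in\overline{D}$, which is exactly what Proposition~\ref{pr3A} (via \cite[Lemma~4.1]{RS}) provides.
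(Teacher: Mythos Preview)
Your proposal is correct and follows essentially the same route as the paper: invoke Proposition~\ref{pr3A} (using that (\ref{eq2}) gives upper $\alpha$-regularity and that doubling of $\mu$ yields (\ref{eq7})) to convert the FMO hypothesis into the integral condition (\ref{eq35}) with $\psi(t)=1/(t\log(1/t))$, and then apply Lemma~\ref{lem6}. The paper's proof is terser but logically identical.
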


\medskip
{\it Proof of the Theorem~\ref{th2}} follows from Lemma~\ref{lem6}
and Proposition~\ref{pr3A}. Indeed, $X$ is upper regular by
(\ref{eq2}), and (\ref{eq7}) holds because the measure $µ$ is
doubling by assumptions. So, the desired statement follows from
the Lemma~\ref{lem6}.~$\Box$

\bigskip
{\bf Acknowledgments.} Author thanks Professor Tomasz Adamowicz,
Institute of Mathematics of Polish Academy of Science, Warsaw, for
joint discussion and useful recommendations.

\medskip
{\bf \noindent Evgeny Sevost'yanov} \\
Zhytomyr Ivan Franko State University,  \\
40 Bol'shaya Berdichevskaya Str., 10 008  Zhytomyr, UKRAINE \\
Phone: +38 -- (066) -- 959 50 34, \\
Email: esevostyanov2009@gmail.com
\end{document}